\titleformat{\subsection}[runin]{\normalfont\bfseries}{\thesubsection}{1em}{}
\tikzset{node distance=2em, ch/.style={circle,draw,on chain,inner sep=2pt},chj/.style={ch,join},every path/.style={shorten >=4pt,shorten <=4pt},line width=1pt,baseline=-1ex}
\numberwithin{equation}{subsection}
\newtheorem{thm}{Theorem}[subsection]
\newtheorem{cor}[thm]{Corollary}
\newtheorem{prop}[thm]{Proposition}
\newtheorem{rem}[thm]{Remark}
\newtheorem{lem}[thm]{Lemma}
\newcommand{\red}{\mathrm{red}}
\newcommand{\Lie}{\mathrm{Lie}}
\newcommand{\Aut}{\mathrm{Aut}}
\newcommand{\Hom}{\mathrm{Hom}}
\newcommand{\Ad}{\mathrm{Ad}}
\begin{document}

\title{Generators of the pro-$p$ Iwahori and Galois representations}
\author[1]{Christophe Cornut}
\author[2]{Jishnu Ray} 
\affil[1]{CNRS - Institut de Mathématiques de Jussieu - Paris Rive Gauche\\

4, place Jussieu, 75252 Paris Cedex 05, France\\

christophe.cornut@imj-prg.fr}

\affil[2]{Département de Mathématiques, Université Paris-Sud $11$\\

 $91405$ Orsay Cedex, France\\ 
 
 jishnu.ray@u-psud.fr}
\date{}

\maketitle

\begin{abstract}
For an odd prime $p$, we determine a minimal set of topological generators of the pro-$p$ Iwahori subgroup of a split reductive group ${G}$ over $\mathbb{Z}_p$. In the simple adjoint case and for any sufficiently large regular prime $p$, we also construct Galois extensions of $\mathbb{Q}$ with Galois group between the pro-$p$ and the standard Iwahori subgroups of ${G}$.
\end{abstract}

\section{Introduction}
Let $p$ be an odd prime, let $\mathbf{G}$ be a split reductive group
over $\mathbb{Z}_{p}$, fix a Borel subgroup $\mathbf{B}=\mathbf{U}\rtimes\mathbf{T}$
of $\mathbf{G}$ with unipotent radical $\mathbf{U}\triangleleft\mathbf{B}$
and maximal split torus $\mathbf{T}\subset\mathbf{B}$. The Iwahori subgroup $I$ and pro-$p$-Iwahori
subgroup $I(1)\subset I$ of $\mathbf{G}(\mathbb{Z}_{p})$ are defined
\cite[3.7]{Ti79} by 
\begin{align*}
I & =  \{ g \in \mathbf{G}(\mathbb{Z}_{p}): \ \text{red}(g)\in\mathbf{B}(\mathbb{F}_{p})\} ,\\
I(1) & =  \{ g\in\mathbf{G}(\mathbb{Z}_{p}): \ \text{red}(g)\in\mathbf{U}(\mathbb{F}_{p})\} .
\end{align*}
where `red' is the reduction map red:  $\textbf{G}(\mathbb{Z}_p) \rightarrow \textbf{G}(\mathbb{F}_p)$. The subgroups $I$ and $I(1)$ are both open subgroups of $\textbf{G}(\mathbb{Z}_p)$.
Thus $I=I(1)\rtimes T_{tors}$ and $\mathbf{T}(\mathbb{Z}_{p})=T(1)\times T_{tors}$
where $T(1)$ and $T_{tors}$ are respectively the pro$-p$ and torsion
subgroups of $\mathbf{T}(\mathbb{Z}_{p})$. Following $\cite{Green}$
(who works with $\mathbf{G}=\mathbf{GL}_{n}$), we construct in section
$2$ a minimal set of topological generators for $I(1)$. 

More precisely, let $M=X^{\ast}(\mathbf{T})$ be the group of characters
of $\mathbf{T}$, $R\subset M$ the set of roots of $\mathbf{T}$
in $\mathfrak{g}=\mathrm{Lie}(\mathbf{G})$, $\Delta\subset R$ the
set of simple roots with respect to $\mathbf{B}$, $R=\coprod_{c\in\mathcal{C}}R_{c}$
the decomposition of $R$ into irreducible components, $\Delta_{c}=\Delta\cap R_{c}$
the simple roots in $R_{c}$, $\alpha_{c,max}$ the highest positive
root in $R_{c}$. We let $\mathcal{D}\subset\mathcal{C}$
be the set of irreducible components of type $G_{2}$ and for $d\in\mathcal{D}$,
we denote by $\delta_{d}\in R_{d,+}$ the sum of the two simple roots
in $\Delta_{d}$. We denote by $M^{\vee}=X_{\ast}(\mathbf{T})$
the group of cocharacters of $\mathbf{T}$, by $\mathbb{Z} R^{\vee}$
the subgroup spanned by the coroots $R^{\vee}\subset M^{\vee}$ and
we fix a set of representatives $\mathcal{S}\subset M^{\vee}$ for
an $\mathbb{F}_{p}$-basis of 
\[
\left(M^{\vee}/\mathbb{Z}R^{\vee}\right)\otimes\mathbb{F}_{p}=\oplus_{s\in\mathcal{S}}\mathbb{F}_{p}\cdot s\otimes1.
\]
We show (see theorem $\ref{thm:Main}$):

\textbf{Theorem}. \textit{The following elements form a minimal set of topological
generators of the pro-$p$-Iwahori subgroup} $I(1)$ \textit{of} $G=\mathbf{G}(\mathbb{Q}_{p})$:
\begin{enumerate}
\item \textit{The semi-simple elements} $\left\{ s(1+p):s\in\mathcal{S}\right\} $
\textit{of} $T(1)$,
\item \textit{For each} $c\in\mathcal{C}$, \textit{the unipotent elements} $\{x_{\alpha}(1):\alpha\in\Delta_{c}\}$,
\item \textit{For each} $c\in\mathcal{C}$, \textit{the unipotent element} $x_{-\alpha_{c,max}}(p)$,
\item \textit{(If} $p=3$\textit{) For each} $d\in\mathcal{D}$, \textit{the unipotent element} $x_{\delta_{d}}(1)$.
\end{enumerate}
This result generalizes Greenberg $\cite{Green}$ proposition $5.3$, see also Schneider and Ollivier ($\cite{OS}$, proposition $3.64$, part $i$) for $G=SL_2$.\\

Let $\mathbf{T}^{ad}$ be the image of $\mathbf{T}$ in the adjoint
group $\mathbf{G}^{ad}$ of $\mathbf{G}$. The action of $\mathbf{G}^{ad}$
on $\mathbf{G}$ induces an action of $\mathbf{T}^{ad}(\mathbb{Z}_{p})$
on $I$ and $I(1)$ and the latter equips the Frattini quotient $\tilde{I}(1)$
of $I(1)$ with a structure of $\mathbb{F}_{p}[T_{tors}^{ad}]$-module,
where $T_{tors}^{ad}$ is the torsion subgroup of $\mathbf{T}^{ad}(\mathbb{Z}_{p})$
(cf. section 2.12). Any element $\beta$ in $\mathbb{Z} R=M^{ad}=X^{\ast}(\mathbf{T}^{ad})$
induces a character $\beta:T_{ad}^{tors}\rightarrow\mathbb{F}_{p}^{\times}$
and we denote by $\mathbb{F}_{p}(\beta)$ the corresponding simple
($1$-dimensional) $\mathbb{F}_{p}[T_{tors}^{ad}]$-module. With these
notations, the theorem implies that\\

\textbf{Corollary}.
\textit{The} $\mathbb{F}_{p}[T_{tors}^{ad}]$-\textit{module} $\tilde{I}(1)$ \textit{is isomorphic
to}
\[
\mathbb{F}_{p}^{\sharp \mathcal{S}}\oplus\Big(\oplus_{\alpha\in\Delta}\mathbb{F}_{p}(\alpha)\Big)\oplus\Big(\oplus_{c\in\mathcal{C}}\mathbb{F}_{p}(-\alpha_{c,max})\Big)\color{magenta}\Big(\oplus\Big(\oplus_{d\in\mathcal{D}}\mathbb{F}_{p}(\delta_{c})\Big)\mbox{\, if }p=3\Big).
\]

Here $\sharp \mathcal{S}$ is the cardinality of $\mathcal{S}$. 
Suppose from now on in this introduction that $\mathbf{G}$ is simple
and of adjoint type. Then:\\

\textbf{Corollary } \textit{The} $\mathbb{F}_{p}[T_{tors}]$\textit{-module} $\tilde{I}(1)$ \textit{is multiplicity
free unless} $p=3$ \textit{and} $\mathbf{G}$ \textit{is of type} $A_{1}$, $B_{\ell}$
\textit{or} $C_{\ell}$ ($\ell\geq2$), $F_{4}$ \textit{or} $G_{2}$.\\

Let now $K$ be a Galois extension of $\mathbb{Q}$, $\Sigma_{p}$
the set of primes of $K$ lying above $p$. Let $M$ be the compositum of all finite $p$-extensions of $K$ which are unramified outside $\Sigma_p$, a Galois extension over $\mathbb{Q}$. Set $\Gamma$ = Gal$(M/K)$, $\Omega=$ Gal$(K/\mathbb{Q})$ and $\Pi$ = Gal$(M/\mathbb{Q})$. We say that
$K$ is $p$-rational if $\Gamma$ is a free pro$-p$ group, see
$\cite{Moh}$. The simplest example is $K=\mathbb{Q}$, where $\Gamma=\Pi$ is also abelian and $M$ is  the cyclotomic $\mathbb{Z}_p$-extension of $\mathbb{Q}$.  Other examples of $p$-rational fields are $\mathbb{Q}(\mu_p)$ where $p$ is a regular prime. 

Assume $K$ is a $p$-rational, totally complex, abelian extension of $\mathbb{Q}$ and $(p-1)\cdot \Omega=0$. Then Greenberg in $\cite{Green}$ constructs a continuous homomorphism 
\begin{equation*}
\rho_0: \  \text{Gal}(M/\mathbb{Q}) \rightarrow GL_n(\mathbb{Z}_p)
\end{equation*}
such that $\rho_0(\Gamma)$ is the pro-$p$ Iwahori subgroup of $SL_n(\mathbb{Z}_p)$,
assuming that there exists $n$ distinct characters of $\Omega$, trivial or odd,  whose product is the trivial character. 

In section $3$, we are proving results which show the existence of $p$-adic Lie extensions of $\mathbb{Q}$ where the Galois group corresponds to a certain specific $p$-adic Lie algebra. More precisely, for $p$-rational fields, we construct continuous morphisms with open image $\rho: \Pi \rightarrow I$ such that $\rho(\Gamma)=I(1)$. We show in corollary $\ref{cor:main}$ that \\

\textbf{Corollary }\textit{Suppose that} $K$ \textit{is a} $p$\textit{-rational totally complex, abelian extension of} $\mathbb{Q}$ \textit{and} 
$(p-1)\cdot \Omega=0$. \textit{Assume
also that if} $p=3$, \textit{our split simple adjoint group} $\mathbf{G}$
\textit{is not of type} $A_{1}$, $B_{\ell}$
\textit{or} $C_{\ell}$ ($\ell\geq2$), $F_{4}$ \textit{or} $G_{2}$. \textit{Then there is a morphism} $\rho:\Pi\rightarrow I$
  \textit{such that} $\rho(\Gamma)=I(1)$ \textit{if and only if there is morphism} $\overline{\rho}:\Omega\rightarrow T_{tors}$
  \textit{such that the characters} $\alpha\circ\overline{\rho}:\Omega\rightarrow\mathbb{F}_{p}^{\times}$
  \textit{for} $\alpha\in \{\Delta \cup -\alpha_{max}\}$ \textit{are all distinct and belong to} $\hat{\Omega}^{\mathcal{S}}_{odd}$.\\
  
  Here $\hat{\Omega}^{\mathcal{S}}_{odd}$ is a subset of the characters of $\Omega$ with values in $\mathbb{F}_p^{\times}$ (it is defined after proposition $\ref{prop:after}$). Furthermore assuming $K=\mathbb{Q}(\mu_p)$ we show the existence of such a morphism $\overline{\rho}: \Omega \rightarrow T_{tors}$ provided
  that $p$ is a sufficiently large regular prime (cf. section $3.2$):\\
  
  \textbf{Corollary }\textit{There is a constant} $c$ \textit{depending only upon the
  type of} $\mathbf{G}$ \textit{such that if} $p>c$ \textit{is a regular prime, then
  for} $K=\mathbb{Q}(\mu_{p})$, $M$, $\Pi$ \textit{and} $\Gamma$ \textit{as above,
  there is a continuous morphism} $\rho:\Pi\rightarrow I$ \textit{with} $\rho(\Gamma)=I(1)$. \\
  
  The constant $c$ can be determined from lemmas $\ref{eq:findcharacters}$, $\ref{lem:find}$ and remark $\ref{rem:exceptional}$.
    
   In section $2$, we find a minimal set of topological generators of $I(1)$ and study the structure of $\tilde{I}(1)$ as an $\mathbb{F}_p[T^{ad}_{tors}]$-module. In section $3$, assuming our group  $\mathbf{G}$ to be simple and adjoint, we discuss the notion of $p$-rational fields and construct continuous morphisms $\rho:\Pi \rightarrow I$ with open image. 
    
    We would like to thank Marie-France Vignéras for useful discussions and for giving us the reference $\cite{OS}$. We are also deeply grateful to Ralph Greenberg for numerous conversations on this topic.
\section{Topological Generators of the pro-$p$ Iwahori}
This section is organized as follows. In sections $(2.1-2.3)$ we introduce the notations, then section $2.4$ states our main result concerning the minimal set of topological generators of $I(1)$ (see theorem $\ref{thm:Main}$) with a discussion of the Iwahori factorisation in section $2.5$. Its proof for $\mathbf{G}$ simple and simply connected is given in sections $(2.6-2.10)$, where section $2.10$ deals with the case of a group of type $G_2$. The proof for an arbitrary split reductive group over $\mathbb{Z}_p$ is discussed in sections $(2.11-2.14)$. In particular, section $2.14$ establishes the minimality of our set of topological generators. Finally, in section $2.15$ we study the structure of the Frattini quotient $\tilde{I}(1)$ of $I(1)$ as an $\mathbb{F}_p[T_{tors}^{ad}]$-module and determine the cases when it is multiplicity free.

\subsection{~}

Let $p$ be an odd prime,  $\mathbf{G}$ be a split reductive group over $\mathbb{Z}_{p}$.
Fix a pinning of $\mathbf{G}$ \cite[XXIII 1]{SGA3.3r} 
\[
\left(\mathbf{T},M,R,\Delta,(X_{\alpha})_{\alpha\in\Delta}\right).
\]
Thus $\mathbf{T}$ is a split maximal torus in $\mathbf{G}$, $M=X^{\ast}(\mathbf{T})$
is its group of characters, 
\[
\mathfrak{g}=\mathfrak{g}_{0}\oplus\oplus_{\alpha\in R}\mathfrak{g}_{\alpha}
\]
is the weight decomposition for the adjoint action of $\mathbf{T}$
on $\mathfrak{g}=\Lie(\mathbf{G})$, $\Delta\subset R$ is a basis
of the root system $R\subset M$ and for each $\alpha\in\Delta$,
$X_{\alpha}$ is a $\mathbb{Z}_{p}$-basis of $\mathfrak{g}_{\alpha}$.

\subsection{~}

We denote by $M^{\vee}=X_{\ast}(\mathbf{T})$ the group of cocharacters
of $\mathbf{T}$, by $\alpha^{\vee}$ the coroot associated to $\alpha\in R$
and by $R^{\vee}\in M^{\vee}$ the set of all such coroots. We expand
$(X_{\alpha})_{\alpha\in\Delta}$ to a Chevalley system $(X_{\alpha})_{\alpha\in R}$
of $\mathbf{G}$ \cite[XXIII 6.2]{SGA3.3r}. For $\alpha\in R$, we
denote by $\mathbf{U}_{\alpha}\subset\mathbf{G}$ the corresponding
unipotent group, by $x_{\alpha}:\mathbf{G}_{a,\mathbb{Z}_{p}}\rightarrow\mathbf{U}_{\alpha}$
the isomorphism given by $x_{\alpha}(t)=\exp(tX_{\alpha})$. The height
$h(\alpha)\in\mathbb{Z}$ of $\alpha\in R$ is the sum of the coefficients
of $\alpha$ in the basis $\Delta$ of $R$. Thus $R_{+}=h^{-1}(\mathbb{Z}_{>0})$
is the set of positive roots in $R$, corresponding to a Borel subgroup
$\mathbf{B}=\mathbf{U}\rtimes\mathbf{T}$ of $\mathbf{G}$ with unipotent
radical $\mathbf{U}$. We let $\mathcal{C}$ be the set of irreducible
components of $R$, so that 
\[
R=\coprod_{c\in\mathcal{C}}R_{c},\quad\Delta=\coprod_{c\in\mathcal{C}}\Delta_{c},\quad R_{+}=\coprod_{c\in\mathcal{C}}R_{c,+}
\]
with $R_{c}$ irreducible, $\Delta_{c}=\Delta\cap R_{c}$ is a basis
of $R_{c}$ and $R_{c,+}=R_{+}\cap R_{c}$ is the corresponding set
of positive roots in $R_{c}$. We denote by $\alpha_{c,max}\in R_{c,+}$
the highest root of $R_{c}$. We let $\mathcal{D}\subset\mathcal{C}$
be the set of irreducible components of type $G_{2}$ and for $d\in\mathcal{D}$,
we denote by $\delta_{d}\in R_{d,+}$ the sum of the two simple roots
in $\Delta_{d}$.

\subsection{~}

Since $\mathbf{G}$ is smooth over $\mathbb{Z}_{p}$, the reduction
map 
\[
\red:\mathbf{G}(\mathbb{Z}_{p})\rightarrow\mathbf{G}(\mathbb{F}_{p})
\]
is surjective and its kernel $G(1)$ is a normal pro-$p$-subgroup
of $\mathbf{G}(\mathbb{Z}_{p})$. The Iwahori subgroup $I$ and pro-$p$-Iwahori
subgroup $I(1)\subset I$ of $\mathbf{G}(\mathbb{Z}_{p})$ are defined
\cite[3.7]{Ti79} by 
\begin{eqnarray*}
I & = & \left\{ g\in\mathbf{G}(\mathbb{Z}_{p}):\red(g)\in\mathbf{B}(\mathbb{F}_{p})\right\} ,\\
I(1) & = & \left\{ g\in\mathbf{G}(\mathbb{Z}_{p}):\red(g)\in\mathbf{U}(\mathbb{F}_{p})\right\} .
\end{eqnarray*}
Thus $I(1)$ is a normal pro-$p$-sylow subgroup of $I$ which contains
$\mathbf{U}(\mathbb{Z}_{p})$ and 
\[
I/I(1)\simeq\mathbf{B}(\mathbb{F}_{p})/\mathbf{U}(\mathbb{F}_{p})\simeq\mathbf{T}(\mathbb{F}_{p}).
\]
Since $\mathbf{T}(\mathbb{Z}_{p})\twoheadrightarrow\mathbf{T}(\mathbb{F}_{p})$
is split by the torsion subgroup $T_{tors}\simeq\mathbf{T}(\mathbb{F}_{p})$
of $\mathbf{T}(\mathbb{Z}_{p})$,
\[
\mathbf{T}(\mathbb{Z}_{p})=T(1)\times T_{tors}\qquad\mbox{and}\qquad I=I(1)\rtimes T_{tors}
\]
where 
\[
T(1)=\mathbf{T}(\mathbb{Z}_{p})\cap I(1)=\ker\left(\mathbf{T}(\mathbb{Z}_{p})\rightarrow\mathbf{T}(\mathbb{F}_{p})\right)
\]
is the pro-$p$-sylow subgroup of $\mathbf{T}(\mathbb{Z}_{p})$.
Note that 
\begin{eqnarray*}
T(1) & = & \Hom\left(M,1+p\mathbb{Z}_{p}\right)=M^{\vee}\otimes(1+p\mathbb{Z}_{p}),\\
T_{tors} & = & \Hom\left(M,\mu_{p-1}\right)=M^{\vee}\otimes\mathbb{F}_{p}^{\times}.
\end{eqnarray*}

\subsection{~}

Let $\mathcal{S}\subset M^{\vee}$ be a set of representatives for
an $\mathbb{F}_{p}$-basis of 
\[
\left(M^{\vee}/\mathbb{Z}R^{\vee}\right)\otimes\mathbb{F}_{p}=\oplus_{s\in\mathcal{S}}\mathbb{F}_{p}\cdot s\otimes1.
\]

\begin{thm}
\label{thm:Main}The following elements form a minimal set of topological
generators of the pro-$p$-Iwahori subgroup $I(1)$ of $G=\mathbf{G}(\mathbb{Q}_{p})$:
\begin{enumerate}
\item The semi-simple elements $\left\{ s(1+p):s\in\mathcal{S}\right\} $
of $T(1)$.
\item For each $c\in\mathcal{C}$, the unipotent elements $\{x_{\alpha}(1):\alpha\in\Delta_{c}\}$.
\item For each $c\in\mathcal{C}$, the unipotent element $x_{-\alpha_{c,max}}(p)$.
\item (If $p=3$) For each $d\in\mathcal{D}$, the unipotent element $x_{\delta_{d}}(1)$.
\end{enumerate}
\end{thm}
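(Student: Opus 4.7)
The plan is to reduce to the Frattini quotient $\tilde I(1)=I(1)/\overline{I(1)^p\cdot[I(1),I(1)]}$, an elementary abelian $\mathbb{F}_p$-vector space: a subset of the pro-$p$ group $I(1)$ topologically generates iff its image spans $\tilde I(1)$, and generates minimally iff the image is an $\mathbb{F}_p$-basis. Thus I must (a) verify that the listed elements span $\tilde I(1)$ and (b) verify that $\dim_{\mathbb{F}_p}\tilde I(1)$ equals the total number of proposed generators.

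For (a), I would first invoke the Iwahori factorization $I(1)=U^-(1)\cdot T(1)\cdot U(\mathbb{Z}_p)$, where $U^-(1)=\ker(\mathbf{U}^-(\mathbb{Z}_p)\to\mathbf{U}^-(\mathbb{F}_p))=\prod_{\alpha\in R_-}x_\alpha(p\mathbb{Z}_p)$ and $U(\mathbb{Z}_p)=\prod_{\alpha\in R_+}x_\alpha(\mathbb{Z}_p)$. Since $x_\alpha(\mathbb{Z}_p)\simeq\mathbb{Z}_p$ and $\mu(1+p\mathbb{Z}_p)\simeq\mathbb{Z}_p$ as pro-$p$ groups, modulo $\Phi(I(1))$ only the classes $\bar x_\alpha(1)$ for $\alpha\in R_+$, $\bar x_\alpha(p)$ for $\alpha\in R_-$, and $\overline{\mu(1+p)}$ for $\mu\in M^\vee$ (depending only on $\mu\bmod p$) survive. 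I would then exploit the Chevalley commutator formula $[x_\alpha(s),x_\beta(t)]=\prod_{i,j>0}x_{i\alpha+j\beta}(c_{ij}s^it^j)$ together with the easy $SL_2$ identity $[x_\alpha(1),x_{-\alpha}(p)]\equiv\alpha^\vee(1+p)\pmod{\Phi(I(1))}$ to establish three facts: (i) by induction on height using that the $c_{ij}$ are nonzero integers, every $\bar x_\gamma(1)$ for $\gamma\in R_{c,+}$ lies in the span of $\{\bar x_\alpha(1):\alpha\in\Delta_c\}$, except when $p=3$ and $c\in\mathcal{D}$, for which the obstruction at the height-$2$ root $\delta_d$ forces item (4); (ii) for each $c\in\mathcal{C}$, repeatedly commuting $x_{-\alpha_{c,max}}(p)$ against the $x_\alpha(1)$, $\alpha\in\Delta_c$, walks the negative root down to every $\bar x_{-\gamma}(p)$, $\gamma\in R_{c,+}$; (iii) once the $\overline{\alpha^\vee(1+p)}$ for $\alpha\in R$ are absorbed via the $SL_2$ identity, the remaining $T(1)$-contribution is exactly $(M^\vee/\mathbb{Z}R^\vee)\otimes\mathbb{F}_p$, spanned by the $\overline{s(1+p)}$, $s\in\mathcal{S}$.

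For (b), I would use the $T_{tors}^{ad}$-action on $\tilde I(1)$ introduced in section $2.12$ to decompose $\tilde I(1)=\tilde I(1)_0\oplus\bigoplus_{\beta\in\mathbb{Z}R\setminus\{0\}}\tilde I(1)_\beta$ into weight components for characters of $T_{tors}^{ad}$. The root-subgroup generators sit in the weight components indexed by $\alpha\in\Delta$, by $-\alpha_{c,max}$, and (when $p=3$) by $\delta_d$, while the semisimple generators sit in $\tilde I(1)_0$; I would verify that each nonzero-weight generator spans its own one-dimensional weight component and that $\dim \tilde I(1)_0 = \#\mathcal{S}$, which yields both the minimality and (after collecting the weight data) the corollary describing $\tilde I(1)$ as an $\mathbb{F}_p[T_{tors}^{ad}]$-module. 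Reducing the general split reductive case to the simple simply-connected case proceeds by a central isogeny, which affects only $T(1)$ and hence only the $\mathcal{S}$-summand. The main obstacle is the $G_2$ computation at $p=3$: using the explicit Chevalley commutator $[x_{\alpha_1}(s),x_{\alpha_2}(t)]=x_{\alpha_1+\alpha_2}(\pm st)\cdot x_{2\alpha_1+\alpha_2}(\pm s^2 t)\cdot x_{3\alpha_1+\alpha_2}(\pm s^3 t)\cdot x_{3\alpha_1+2\alpha_2}(\pm 2 s^3 t^2)$ one must carefully check, at the Frattini level, which chains of commutators could produce $\bar x_{\delta_d}(1)$ and verify that every such chain carries a structure constant divisible by $3$, so that $\bar x_{\delta_d}(1)$ is genuinely independent of items (1)–(3) and item (4) is both necessary and sufficient.
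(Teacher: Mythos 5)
Your overall strategy—reduce to the Frattini quotient, show the listed classes span, and use the $T_{tors}^{ad}$-weight decomposition to get independence—is a reasonable reformulation of the paper's argument, and items (i) and (iii) of your plan correspond closely to Lemmas~\ref{lem:SpanI+}, \ref{lem:SpanI-}, \ref{lem:SpanT} together with the Iwahori decomposition of section~\ref{sub:IwaDec}. However, there are two genuine gaps.

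First, your proposed mechanism for the $G_2$, $p=3$ obstruction is wrong. You claim that every chain of commutators landing on $\bar x_{\delta_d}(1)$ carries a structure constant divisible by $3$; but the very $(1,3)$ Chevalley formula you quote has structure constant $\pm 1$ (a unit) for the $x_{\alpha+\beta}=x_{\delta_d}$ term. So commutators of $x_\alpha(\cdot)$ and $x_\beta(\cdot)$ do reach $x_{\delta_d}$ with unit coefficients; the actual obstruction is that they reach $x_{\delta_d}$ only coupled with $x_{3\alpha+\beta}$ (the terms with even powers of $u$ can be killed by comparing $u=\pm1$, but the odd ones $\bar x_{\alpha+\beta}$ and $\bar x_{3\alpha+\beta}$ cannot be decoupled this way), and the remaining commutators that could isolate $x_{3\alpha+\beta}$—the $(3,1)$ and $(2,2)$ formulas—are the ones with coefficients divisible by $3$. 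This yields a codimension-one linear relation inside a rank-$3$ graded piece, not a divisibility statement about individual coefficients. The paper handles this by introducing the auxiliary $\mathbb{Z}_3$-module $J'_4\subset J_4=(\mathbf{U}_2/\mathbf{U}_5)(\mathbb{Z}_3)$, showing it is normal and that the extension $G_4/J'_4$ of $\overline{I}_1^+$ splits, whence $I^+/H\simeq J_4/J'_4\simeq\mathbb{F}_3$. Your plan as written would not establish that $x_{\delta_d}(1)\notin\Phi(I(1))$.

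Second, your minimality argument via weight decomposition—``each nonzero-weight generator spans its own one-dimensional weight component''—fails precisely in the non-multiplicity-free cases of Corollary~\ref{cor:good} ($p=3$, types $A_1,B_\ell,C_\ell,F_4,G_2$), where some of $\alpha\in\Delta$, $-\alpha_{c,max}$, $\delta_d$ coincide modulo $(p-1)\mathbb{Z}R$. For instance in type $B_\ell$ with $p=3$, $-\alpha_{max}\equiv\alpha_1\bmod 2$, so $\bar x_{-\alpha_{max}}(p)$ and $\bar x_{\alpha_1}(1)$ lie in the same weight space and your argument does not separate them. The paper instead exhibits explicit quotients detecting each generator: the map $I(1)\twoheadrightarrow\overline{\mathbf{U}}_1(\mathbb{F}_p)$ for the $\Delta$-generators, the Iwahori decomposition with $\mathbf{U}^-=\prod_c\mathbf{U}_c^-$ for the $-\alpha_{c,max}$ generators, the cokernel $Q=(M^\vee/\mathbb{Z}R^\vee)\otimes(1+p\mathbb{Z}_p)$ for the $\mathcal{S}$-generators, and the quotient $I(1)\twoheadrightarrow X/Y\simeq\mathbb{F}_3^3$ constructed via $J'_4$ for the $\delta_d$-generators; these arguments are robust and do not rely on the weights being distinct. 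You would need to substitute an argument of that type to complete minimality in the exceptional cases.
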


\subsection{~}\label{sub:IwaDec}

By~\cite[XXII 5.9.5]{SGA3.3r} and its proof, there is a canonical
filtration 
\[
\mathbf{U}=\mathbf{U}_{1}\supset\mathbf{U}_{2}\supset\cdots\supset\mathbf{U}_{h}\supset\mathbf{U}_{h+1}=1
\]
of $\mathbf{U}$ by normal subgroups such that for $1\leq i\leq h$,
the product map (in any order)
\[
\prod_{h(\alpha)=i}\mathbf{U}_{\alpha}\rightarrow\mathbf{U}
\]
factors through $\mathbf{U}_{i}$ and yields an isomorphism of group
schemes
\[
\prod_{h(\alpha)=i}\mathbf{U}_{\alpha}\stackrel{\simeq}{\longrightarrow}\overline{\mathbf{U}}_{i},\quad\overline{\mathbf{U}}_{i}=\mathbf{U}_{i}/\mathbf{U}_{i+1}.
\]
By~\cite[XXII 5.9.6]{SGA3.3r} and its proof, 
\[
\overline{\mathbf{U}}_{i}(R)=\mathbf{U}_{i}(R)/\mathbf{U}_{i+1}(R)
\]
for every $\mathbb{Z}_{p}$-algebra $R$. It follows that the product
map 
\[
\prod_{h(\alpha)=i}\mathbf{U}_{\alpha}\times\mathbf{U}_{i+1}\rightarrow\mathbf{U}_{i}
\]
is an isomorphism of $\mathbb{Z}_p$-schemes and by induction, the product map 
\[
\prod_{h(\alpha)=1}\mathbf{U}_{\alpha}\times\prod_{h(\alpha)=2}\mathbf{U}_{\alpha}\times\cdots\times\prod_{h(\alpha)=h}\mathbf{U}_{\alpha}\rightarrow\mathbf{U}
\]
is an isomorphism of $\mathbb{Z}_p$-schemes. Similarly, the product map
\[
\prod_{h(\alpha)=-h}\mathbf{U}_{\alpha}\times\prod_{h(\alpha)=-h+1}\mathbf{U}_{\alpha}\times\cdots\times\prod_{h(\alpha)=-1}\mathbf{U}_{\alpha}\rightarrow\mathbf{U}^{-}
\]
is an isomorphism of $\mathbb{Z}_p$-schemes, where $\mathbf{U}^{-}$ is the unipotent
radical of the Borel subgroup $\mathbf{B}^{-}=\mathbf{U}^{-}\rtimes\mathbf{T}$
opposed to $\mathbf{B}$ with respect to $\mathbf{T}$. Then by \cite[XXII 4.1.2]{SGA3.3r},
there is an open subscheme $\Omega$ of $\mathbf{G}$ (the ``big
cell'') such that the product map 
\[
\mathbf{U}^{-}\times\mathbf{T}\times\mathbf{U}\rightarrow\mathbf{G}
\]
is an open immersion with image $\Omega$. Plainly, $\mathbf{B}=\mathbf{U}\rtimes\mathbf{T}$
is a closed subscheme of $\Omega$. Thus by definition of $I$, $I\subset\Omega(\mathbb{Z}_{p})$
and therefore any element of $I$ (resp. $I(1)$) can be written uniquely
as a product 
\[
\prod_{h(\alpha)=-h}x_{\alpha}(a_{\alpha})\times\cdots\times\prod_{h(\alpha)=-1}x_{\alpha}(a_{\alpha})\times t\times\prod_{h(\alpha)=1}x_{\alpha}(a_{\alpha})\times\cdots\times\prod_{h(\alpha)=h}x_{\alpha}(a_{\alpha})
\]
where $a_{\alpha}\in\mathbb{Z}_{p}$ for $\alpha\in R_{+}$, $a_{\alpha}\in p\mathbb{Z}_{p}$
for $\alpha\in R_{-}=-R_{+}$ and $t\in\mathbf{T}(\mathbb{Z}_{p})$
(resp.~$T(1)$). This is the Iwahori decomposition of $I$ (resp.
$I(1)$). If $I^{+}$ is the group spanned by $\left\{ x_{\alpha}(\mathbb{Z}_{p}):\alpha\in R_{+}\right\} $
and $I^{-}$ is the group spanned by $\left\{ x_{\alpha}(p\mathbb{Z}_{p}):\alpha\in R_{-}\right\} $,
then $I^{+}=\mathbf{U}(\mathbb{Z}_{p})$, $I^{-}\subset\mathbf{U}^{-}(\mathbb{Z}_{p})$
and every $x\in I$ (resp.~$I(1)$) has a unique decomposition $x=u^{-}tu^{+}$
with $u^{\pm}\in I^{\pm}$ and $t\in\mathbf{T}(\mathbb{Z}_{p})$ (resp.~$t\in T(1)$).

\subsection{~}

Suppose first that $\mathbf{G}$ is semi-simple and simply connected.
Then $M^{\vee}=\mathbb{Z}R^{\vee}$, thus $\mathcal{S}=\emptyset$.
Moreover, everything splits according to the decomposition $R=\coprod R_{c}$:
\[
\mathbf{G}=\prod\mathbf{G}_{c},\quad\mathbf{T}=\prod\mathbf{T}_{c},\quad\mathbf{B}=\prod\mathbf{B}_{c},\quad I=\prod I_{c}\quad\mbox{and}\quad I(1)=\prod I_{c}(1).
\]
To establish the theorem in this case, we may thus furthermore assume
that $\mathbf{G}$ is simple. From now on until section~\ref{sub:Isogenies},
we therefore assume that 
\[
\mathbf{G}\mbox{\, is (split) simple and simply connected.}
\]

\subsection{~}

As a first step, we show that 
\begin{lem}
\label{lem:SpanT}The group generated by $I^{+}$ and $I^{-}$contains
$T(1)$. \end{lem}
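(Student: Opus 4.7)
My plan is to reduce the statement to an explicit $SL_{2}$ computation inside the rank-one subgroup attached to each simple root.

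First, I would exploit the simply connected hypothesis: $M^{\vee}=\mathbb{Z}R^{\vee}$, and the simple coroots $\{\alpha^{\vee}:\alpha\in\Delta\}$ form a $\mathbb{Z}$-basis of $M^{\vee}$. Consequently every element of $T(1)=M^{\vee}\otimes(1+p\mathbb{Z}_{p})$ is a finite product of elements of the form $\alpha^{\vee}(1+pc)$ with $\alpha\in\Delta$ and $c\in\mathbb{Z}_{p}$, so it suffices to show that each such element belongs to $\langle I^{+},I^{-}\rangle$.

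To that end I would fix $\alpha\in\Delta$ and $c\in\mathbb{Z}_{p}$, and study the element $y=x_{\alpha}(c)\cdot x_{-\alpha}(p)\in I^{+}\cdot I^{-}$. This lies in the $\mathbb{Z}_{p}$-points of the simply connected rank-one subgroup $\mathbf{G}_{\alpha}\cong SL_{2}$ generated by $\mathbf{U}_{\pm\alpha}$ and $\alpha^{\vee}(\mathbb{G}_{m})$, so applying the Iwahori decomposition of section~\ref{sub:IwaDec} to $\mathbf{G}_{\alpha}$ produces a unique factorization $y=x_{-\alpha}(a)\,\alpha^{\vee}(s)\,x_{\alpha}(b)$ with $a\in p\mathbb{Z}_{p}$, $b\in\mathbb{Z}_{p}$, $s\in 1+p\mathbb{Z}_{p}$. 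The one concrete computation is in $SL_{2}$: multiplying out $\bigl(\begin{smallmatrix}1 & c\\ 0 & 1\end{smallmatrix}\bigr)\bigl(\begin{smallmatrix}1 & 0\\ p & 1\end{smallmatrix}\bigr)=\bigl(\begin{smallmatrix}1+pc & c\\ p & 1\end{smallmatrix}\bigr)$ and comparing with the lower--diagonal--upper factorization forces $s=1+pc$ (together with $a=p/(1+pc)$ and $b=c/(1+pc)$, both integral because $1+pc$ is a unit in $\mathbb{Z}_{p}$).

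Rearranging then gives $\alpha^{\vee}(1+pc)=x_{-\alpha}(-a)\cdot y\cdot x_{\alpha}(-b)\in\langle I^{+},I^{-}\rangle$, and letting $\alpha\in\Delta$ and $c\in\mathbb{Z}_{p}$ vary finishes the proof. The only delicate point I foresee is ensuring the integrality conditions $a\in p\mathbb{Z}_{p}$ and $b\in\mathbb{Z}_{p}$ required by the Iwahori decomposition; this is precisely why the correct choice is to put the factor of $p$ on the lower root side (i.e.\ $y=x_{\alpha}(c)\cdot x_{-\alpha}(p)$ rather than the reverse), guaranteeing both that $y\in I(1)$ and that the Bruhat factors have the desired integrality.
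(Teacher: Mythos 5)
Your proof is correct and follows essentially the same route as the paper: both exploit the simply connected hypothesis to reduce to the simple coroots $\alpha^{\vee}(1+p\mathbb{Z}_{p})$, pass through the $SL_{2}$ homomorphism $f_{\alpha}$, and write the coroot element as an explicit product of four unipotent factors lying alternately in $I^{-}$ and $I^{+}$. The only cosmetic difference is the concrete $SL_{2}$ identity: the paper quotes a four-matrix formula directly (from SGA3 XX 2.7), whereas you derive an equivalent identity by computing the Iwahori (LDU) factorization of $x_{\alpha}(c)x_{-\alpha}(p)=\bigl(\begin{smallmatrix}1+pc & c\\ p & 1\end{smallmatrix}\bigr)$; both are correct and the integrality checks you perform (that $a=p/(1+pc)\in p\mathbb{Z}_{p}$ and $b=c/(1+pc)\in\mathbb{Z}_{p}$) are exactly what makes the argument close.
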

\begin{proof}
Since $\mathbf{G}$ is simply connected, 
\[
\prod_{\alpha\in\Delta}\alpha^{\vee}:\prod_{\alpha\in\Delta}\mathbf{G}_{m,\mathbb{Z}_{p}}\rightarrow\mathbf{T}
\]
is an isomorphism, thus 
\[
T_{c}(1)=\prod_{\alpha\in\Delta}\alpha^{\vee}(1+p\mathbb{Z}_{p}).
\]
Now for any $\alpha\in\Delta$, there is a unique morphism \cite[XX 5.8]{SGA3.3r}
\[
f_{\alpha}:\mathbf{SL}(2)_{\mathbb{Z}_{p}}\rightarrow\mathbf{G}
\]
such that for every $u,v\in\mathbb{Z}_{p}$ and $x \in \mathbb{Z}_p^{\times}$,
\[
f_{\alpha}\left(\begin{array}{cc}
1 & u\\
0 & 1
\end{array}\right)=x_{\alpha}(u),\quad f_{\alpha}\left(\begin{array}{cc}
1 & 0\\
v & 1
\end{array}\right)=x_{-\alpha}(v)\quad\mbox{and}\quad f_{\alpha}\left(\begin{array}{cc}
x & 0\\
0 & x^{-1}
\end{array}\right)=\alpha^{\vee}(x).
\]
Since for every $x\in 1+p\mathbb{Z}_p$ \cite[XX 2.7]{SGA3.3r},
\[
\left(\begin{array}{cc}
1 & 0\\
x^{-1}-1 & 1
\end{array}\right)\left(\begin{array}{cc}
1 & 1\\
0 & 1
\end{array}\right)\left(\begin{array}{cc}
1 & 0\\
x-1 & 1
\end{array}\right)\left(\begin{array}{cc}
1 & -x^{-1}\\
0 & 1
\end{array}\right)=\left(\begin{array}{cc}
x & 0\\
0 & x^{-1}
\end{array}\right)
\]
in $\mathbf{SL}(2)(\mathbb{Z}_{p})$, it follows that $\alpha^{\vee}(1+p\mathbb{Z}_{p})$
is already contained in the subgroup of $\mathbf{G}(\mathbb{Z}_{p})$
generated by $x_{\alpha}(\mathbb{Z}_{p}^{\times})$ and $x_{-\alpha}(p\mathbb{Z}_{p})$.
This proves the lemma.
\end{proof}

\subsection{~}

Recall from~\cite[XXI 2.3.5]{SGA3.3r} that for any pair of non-proportional
roots $\alpha\neq\pm\beta$ in $R$, the set of integers $k\in\mathbb{Z}$
such that $\beta+k\alpha\in R$ is an interval of length at most $3$,
i.e.~there are integers $r\geq1$ and $s\geq0$ with $r+s\leq4$
such that 
\[
R\cap\{\beta+\mathbb{Z}\alpha\}=\{\beta-(r-1)\alpha,\cdots,\beta+s\alpha\}.
\]
The above set is called the $\alpha$-chain through $\beta$ and any
such set is called a root chain in $R$. Let $\left\Vert -\right\Vert :R\rightarrow\mathbb{R}_{+}$
be the length function on $R$. 
\begin{prop}
\label{prop:ChevalleyFormulas}Suppose $\left\Vert \alpha\right\Vert \leq\left\Vert \beta\right\Vert $.
Then for any $u,v\in\mathbf{G}_{a}$ the commutator 
\[
\left[x_{\beta}(v):x_{\alpha}(u)\right]=x_{\beta}(v)x_{\alpha}(u)x_{\beta}(-v)x_{\alpha}(-u)
\]
is given by the following table, with $(r,s)$ as above: 
\[
\begin{array}{cl}
(r,s) & [x_{\beta}(v):x_{\alpha}(u)]\\
(-,0) & 1\\
(1,1) & x_{\alpha+\beta}(\pm uv)\\
(1,2) & x_{\alpha+\beta}(\pm uv)\cdot x_{2\alpha+\beta}(\pm u^{2}v)\\
(1,3) & x_{\alpha+\beta}(\pm uv)\cdot x_{2\alpha+\beta}(\pm u^{2}v)\cdot x_{3\alpha+\beta}(\pm u^{3}v)\cdot x_{3\alpha+2\beta}(\pm u^{3}v^{2})\\
(2,1) & x_{\alpha+\beta}(\pm2uv)\\
(2,2) & x_{\alpha+\beta}(\pm2uv)\cdot x_{2\alpha+\beta}(\pm3u^{2}v)\cdot x_{\alpha+2\beta}(\pm3uv^{2})\\
(3,1) & x_{\alpha+\beta}(\pm3uv)
\end{array}
\]
The signs are unspecified, but only depend upon $\alpha$ and $\beta$. \end{prop}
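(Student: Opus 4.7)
The strategy is to reduce to the rank-$2$ root system generated by $\alpha$ and $\beta$, use a torus-scaling argument to constrain the shape of the commutator, and then compute the remaining coefficients explicitly in each rank-$2$ Chevalley group.

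First, I would observe that $[x_\beta(v), x_\alpha(u)]$ lies in the unipotent subgroup $\mathbf{U}'$ generated by those $\mathbf{U}_\gamma$ with $\gamma = i\alpha + j\beta \in R$ and $i, j \geq 1$: every such $\gamma$ is positive for an ordering in which $\alpha$ and $\beta$ are positive, and $\mathbf{U}'$ is the unipotent radical of a Borel in the rank-$2$ reductive subgroup $\mathbf{G}'$ of $\mathbf{G}$ attached to the closed subsystem $R' = R \cap (\mathbb{Z}\alpha + \mathbb{Z}\beta)$. Applying the factorisation of section~\ref{sub:IwaDec} to $\mathbf{U}'$ yields a unique expression
\[
[x_\beta(v), x_\alpha(u)] = \prod_{\substack{i,j \geq 1 \\ i\alpha + j\beta \in R}} x_{i\alpha + j\beta}\bigl(c_{ij}(u,v)\bigr)
\]
for some polynomials $c_{ij}(u, v) \in \mathbb{Z}_p[u, v]$ with $c_{ij}(0, 0) = 0$.

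Second, I would pin down the shape of the $c_{ij}$ by a torus-equivariance argument. Conjugating both sides by $\alpha^\vee(s)$ and by $\beta^\vee(t)$ and matching the weights through which $\mathbf{T}$ acts on each $\mathbf{U}_\gamma$, one sees that $c_{ij}$ is bi-homogeneous of bi-degree $(i, j)$, hence $c_{ij}(u, v) = a_{ij} u^i v^j$ for some $a_{ij} \in \mathbb{Z}_p$ depending only on the Chevalley system and on the ordered pair $(\alpha, \beta)$. The subsystem $R'$ is of type $A_1 \times A_1$, $A_2$, $B_2$, or $G_2$, and the hypothesis $\|\alpha\| \leq \|\beta\|$ restricts which chain data $(r, s)$ can arise; the row $(-, 0)$ covers exactly those pairs for which no positive combination of $\alpha, \beta$ is a root (in particular $A_1 \times A_1$), in which case the product above is empty and the commutator is trivial, while the remaining cases $(1,1), (1,2), (1,3), (2,1), (2,2), (3,1)$ correspond to the admissible chain patterns in $A_2, B_2$, and $G_2$ under the length constraint.

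Third, I would compute the scalars $a_{ij}$ case by case in the simply connected Chevalley groups of type $A_2$, $B_2$, and $G_2$. In each case, one expands the product $\exp(vX_\beta)\exp(uX_\alpha)\exp(-vX_\beta)\exp(-uX_\alpha)$ via the Baker--Campbell--Hausdorff formula and rewrites it in the prescribed ordered form, using the bracket relations $[X_\alpha, X_\beta] = N_{\alpha,\beta} X_{\alpha+\beta}$ prescribed by the Chevalley system; the signs are determined by the $N_{\alpha,\beta}$ inherent in the pinning and therefore depend only on $(\alpha,\beta)$ as claimed. The main obstacle here is isolating the numerical coefficients $\pm 2$ and $\pm 3$ in the $B_2$ and $G_2$ rows — in particular the $\pm u^3 v^2$ term in the $(1,3)$ row — which originate from third- and fourth-order brackets in BCH; the cleanest way to verify them is a direct matrix calculation in $\mathbf{Sp}_4$ and in a faithful $7$-dimensional representation of the simply connected group of type $G_2$, carried out once and for all.
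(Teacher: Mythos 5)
The paper's proof is a one-line citation to \cite[XXIII 6.4]{SGA3.3r}, and your outline reconstructs precisely the standard argument that the cited reference (and also Steinberg's lectures or Carter's book) uses, so the two approaches coincide. One minor slip to note: $\mathbf{U}'$ is not the unipotent radical of a Borel of $\mathbf{G}'$ but rather its proper normal subgroup generated by the $\mathbf{U}_{\gamma}$ with $\gamma$ of height at least two in $R'_{+}$, though the product decomposition you invoke still applies to it, so the argument is unaffected.
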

\begin{proof}
This is \cite[XXIII 6.4]{SGA3.3r}.\end{proof}
\begin{cor}
\label{cor:SpanAlpha+Beta}If $r+s\leq3$ and $\alpha+\beta\in R$
(i.e.~$s\geq1$), then for any $a,b\in\mathbb{Z}$, the subgroup
of $G$ generated by $x_{\alpha}(p^{a}\mathbb{Z}_{p})$ and $x_{\beta}(p^{b}\mathbb{Z}_{p})$
contains $x_{\alpha+\beta}(p^{a+b}\mathbb{Z}_{p})$. \end{cor}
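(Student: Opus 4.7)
The plan is to enumerate the cases of Proposition \ref{prop:ChevalleyFormulas} allowed by the hypothesis. Since $r+s\le 3$ and $s\ge 1$ (the latter being equivalent to $\alpha+\beta\in R$), only $(r,s)\in\{(1,1),(2,1),(1,2)\}$ can occur, and I would treat them in turn.

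In the cases $(1,1)$ and $(2,1)$ the commutator $[x_\beta(v),x_\alpha(u)]$ is a single factor $x_{\alpha+\beta}(\varepsilon k u v)$ with $\varepsilon=\pm 1$ fixed (depending only on $\alpha,\beta$) and $k\in\{1,2\}$. Because $p$ is odd, $k$ is a unit in $\mathbb{Z}_p$, so fixing $u=p^a$ and letting $v$ range over $p^b\mathbb{Z}_p$ already produces all of $x_{\alpha+\beta}(p^{a+b}\mathbb{Z}_p)$, which lies in the generated subgroup. The substantive case is $(1,2)$, where an extra factor appears:
\[
[x_\beta(v),x_\alpha(u)] = x_{\alpha+\beta}(\varepsilon_1 uv)\cdot x_{2\alpha+\beta}(\varepsilon_2 u^2 v),
\]
with $\varepsilon_1,\varepsilon_2\in\{\pm 1\}$ fixed. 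To isolate the $x_{\alpha+\beta}$ part I would exploit the fact that the second factor is quadratic in $u$ and hence insensitive to replacing $u$ by $-u$. Setting, for each $z\in\mathbb{Z}_p$,
\[
C^+_z = [x_\beta(p^b z),x_\alpha(p^a)],\qquad C^-_z = [x_\beta(p^b z),x_\alpha(-p^a)],
\]
the two commutators share the same $x_{2\alpha+\beta}$-factor but have opposite $x_{\alpha+\beta}$-factors. In the product $C^+_z (C^-_z)^{-1}$ the two $x_{2\alpha+\beta}$-factors meet in the middle and cancel inside the abelian one-parameter subgroup $\mathbf{U}_{2\alpha+\beta}$, while the remaining two $x_{\alpha+\beta}$-factors combine additively in $\mathbf{U}_{\alpha+\beta}$ to yield $x_{\alpha+\beta}(2\varepsilon_1 p^{a+b}z)$. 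Since $2\varepsilon_1$ is a unit in $\mathbb{Z}_p$ (again because $p$ is odd), letting $z$ vary over $\mathbb{Z}_p$ sweeps out the whole of $x_{\alpha+\beta}(p^{a+b}\mathbb{Z}_p)$.

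The only real obstacle is the $(1,2)$ case, and it is handled by this parity trick: the key point is that $u\mapsto u^2 v$ is even in $u$ while $u\mapsto uv$ is odd, so that passing to $-u$ cleanly separates the two contributions. The remaining two cases reduce immediately to the fact that $k\in\{1,2\}$ is invertible in $\mathbb{Z}_p$ whenever $p$ is odd.
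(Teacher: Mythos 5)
Your proof is correct and follows essentially the same approach as the paper's: handle $(1,1)$ and $(2,1)$ directly (using that $2\in\mathbb{Z}_p^\times$), then in the remaining case $(1,2)$ isolate the $x_{\alpha+\beta}$-factor by cancelling the $x_{2\alpha+\beta}$-factor. Your sign trick $u\mapsto -u$ is exactly the specialization $w=-1$ of the paper's substitution $(u,v)\mapsto(uw^{-1},w^2v)$ (which cancels the $x_{2\alpha+\beta}$-factor and leaves $x_{\alpha+\beta}(\pm uv(1-w))$, with $1-w=2$ a unit), so the two arguments coincide.
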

\begin{proof}
This is obvious if $(r,s)=(1,1)$ or $(2,1)$ (using $p\neq2$ in
the latter case). For the only remaining case where $(r,s)=(1,3)$,
note that
\[
[x_{\beta}(v):x_{\alpha}(u)][x_{\beta}(w^{2}v):x_{\alpha}(uw^{-1})]^{-1}=x_{\alpha+\beta}(\pm uv(1-w)).
\]
Since $p\neq2$, we may find $w\in\mathbb{Z}_{p}^{\times}$ with $(1-w)\in\mathbb{Z}_{p}^{\times}$.
Our claim easily follows. \end{proof}
\begin{lem}
If $R$ contains any root chain of length $3$, then $\mathbf{G}$
is of type $G_{2}$.\end{lem}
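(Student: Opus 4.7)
The plan is to extract from the chain of length $3$ a pair of roots whose Cartan integers reproduce the $G_2$ Cartan matrix, and then to conclude via the classification of irreducible root systems that $R$ itself is of type $G_2$.

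By definition, such a chain takes the form $\{\beta + k\alpha : -(r-1) \le k \le s\} \subset R$ for non-proportional roots $\alpha, \beta$ with $r + s = 4$. Translating $\beta$ to the lowest element of the chain, I may assume $(r,s) = (1,3)$, so that $\beta, \beta + \alpha, \beta + 2\alpha, \beta + 3\alpha \in R$ while $\beta - \alpha \notin R$. The reflection $s_\alpha$ preserves the $\alpha$-chain through $\beta$ and reverses its orientation, so $s_\alpha(\beta) = \beta + 3\alpha$; combined with the identity $s_\alpha(\beta) = \beta - \langle \beta, \alpha^\vee \rangle\alpha$, this yields $\langle \beta, \alpha^\vee \rangle = -3$.

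Next, the classical inequality $\langle \beta, \alpha^\vee \rangle \cdot \langle \alpha, \beta^\vee \rangle = 4\cos^2 \theta_{\alpha,\beta} \in \{0, 1, 2, 3\}$ for non-proportional roots forces $\langle \alpha, \beta^\vee \rangle = -1$. Consequently the length ratio is $|\beta|^2 / |\alpha|^2 = 3$, and the pair $(\alpha, \beta)$ realises the $G_2$ Cartan matrix inside $R$.

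Finally, since $\mathbf{G}$ is (split) simple, its root system $R$ is irreducible. By the classification of irreducible root systems, a Cartan integer of absolute value $3$ (equivalently, a length ratio of $\sqrt{3}$ between two roots) occurs only in type $G_2$. Therefore $R$, and hence $\mathbf{G}$, is of type $G_2$. The principal obstacle is this last step, which invokes the classification; one could alternatively verify it by direct inspection of each Dynkin type, ruling out the occurrence of a triple bond outside $G_2$.
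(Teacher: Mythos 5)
Your proof is correct, and it follows a genuinely different route from the paper's. The paper invokes \cite[XXI 3.5.4]{SGA3.3r} to find a base $\Delta'$ with $\alpha\in\Delta'$ and $\beta$ lying in the span of two elements $\alpha,\alpha'$ of $\Delta'$; the rank-$2$ subsystem generated by $\{\alpha,\alpha'\}$ then inherits the length-$3$ string and is identified as $G_2$ by inspection of rank-$2$ systems, after which the triple edge in the Dynkin diagram forces $R$ itself to be $G_2$. You instead work directly with the given pair $(\alpha,\beta)$: normalizing so that the chain starts at $\beta$, the reflection $s_\alpha$ reverses the chain and immediately yields $\langle\beta,\alpha^\vee\rangle=-3$, hence $\langle\alpha,\beta^\vee\rangle=-1$ and $|\beta|^2/|\alpha|^2=3$, and the classification of irreducible root systems (length ratio $\sqrt3$ occurs only in $G_2$) finishes the argument. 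Your computation of the Cartan integer via the reflection is clean and avoids the SGA3 reduction to a rank-$2$ subsystem, though it still leans on the classification at the end, just as the paper does for the final ``triple edge implies $G_2$'' step; worth noting is that your $\alpha,\beta$ need not be simple, so the fact you actually need is that a length ratio of $\sqrt{3}$ between \emph{any} two roots of an irreducible system occurs only in $G_2$, which does follow from the classification but is slightly stronger than ``a triple edge in the Dynkin diagram occurs only in $G_2$.'' Both approaches are standard and of comparable weight.
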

\begin{proof}
Suppose that the $\alpha$-chain through $\beta$ has length $3$.
By \cite[XXI 3.5.4]{SGA3.3r}, there is a basis $\Delta'$ of $R$
such that $\alpha\in\Delta'$ and $\beta=a\alpha+b\alpha'$ with $\alpha'\in\Delta'$,
$a,b\in\mathbb{N}$. The root system $R'$ spanned by $\Delta'=\{\alpha,\alpha'\}$
\cite[XXI 3.4.6]{SGA3.3r} then also contains an $\alpha$-chain of
length $3$. By inspection of the root systems of rank $2$, for instance
in~\cite[XXIII 3]{SGA3.3r}, we find that $R'$ is of type $G_{2}$.
In particular, the Dynkin diagram of $R$ contains a triple edge (linking
the vertices corresponding to $\alpha$ and $\alpha'$), which implies
that actually $R=R'$ is of type $G_{2}$.
\end{proof}

\subsection{~}

We now establish our theorem~\ref{thm:Main} for a group $\mathbf{G}$
which is simple and simply connected, but not of type $G_{2}$. 
\begin{lem}
\label{lem:SpanI+}The group $I^{+}$ is generated by $\left\{ x_{\alpha}(\mathbb{Z}_{p}):\alpha\in\Delta\right\} $.\end{lem}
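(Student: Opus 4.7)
The plan is to show, by induction on the height $h(\alpha)$, that for every $\alpha \in R_+$ the whole one-parameter subgroup $x_\alpha(\mathbb{Z}_p)$ lies in the subgroup $H$ generated by $\{x_\gamma(\mathbb{Z}_p) : \gamma \in \Delta\}$. Once this is established, the Iwahori decomposition of section~\ref{sub:IwaDec} expresses each element of $I^+ = \mathbf{U}(\mathbb{Z}_p)$ as an ordered product of factors $x_\alpha(a_\alpha)$ with $\alpha \in R_+$ and $a_\alpha \in \mathbb{Z}_p$, so $I^+ \subset H$; the reverse inclusion is immediate from the definition of $I^+$.

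\textbf{Base case.} If $h(\alpha) = 1$ then $\alpha \in \Delta$ and $x_\alpha(\mathbb{Z}_p) \subset H$ by construction.

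\textbf{Inductive step.} Assume $h(\alpha) \geq 2$. A standard property of irreducible root systems produces a simple root $\alpha' \in \Delta$ such that $\beta := \alpha - \alpha' \in R_+$: indeed, since $\alpha$ is not simple there must exist some $\alpha' \in \Delta$ with $\langle \alpha, \alpha'^\vee\rangle > 0$, and $\alpha - \alpha'$ is then a (necessarily positive, by height) root. Note that $\alpha'$ and $\beta$ are non-proportional, since otherwise $\alpha$ would itself be a scalar multiple of $\alpha'$, forcing $\alpha = \alpha'$ and contradicting $h(\alpha) \geq 2$. The assumption that $\mathbf{G}$ is not of type $G_2$, together with the previous lemma, implies that no root chain of $R$ has length $3$, so the parameters $(r,s)$ attached to the pair $(\alpha',\beta)$ satisfy $r+s \leq 3$; and $\alpha' + \beta = \alpha \in R$ shows $s \geq 1$. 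Applying Corollary~\ref{cor:SpanAlpha+Beta} with $a = b = 0$ (relabelling $\alpha'$ and $\beta$, if necessary, so as to fulfil the length hypothesis $\|\cdot\| \leq \|\cdot\|$ of Proposition~\ref{prop:ChevalleyFormulas}), we conclude that $x_\alpha(\mathbb{Z}_p)$ is contained in the subgroup generated by $x_{\alpha'}(\mathbb{Z}_p)$ and $x_\beta(\mathbb{Z}_p)$. By induction $x_\beta(\mathbb{Z}_p) \subset H$, and $x_{\alpha'}(\mathbb{Z}_p) \subset H$ by construction, so $x_\alpha(\mathbb{Z}_p) \subset H$.

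\textbf{Main obstacle.} The only nontrivial point is the existence, for each non-simple positive root $\alpha$, of a simple root $\alpha' \in \Delta$ with $\alpha - \alpha' \in R_+$; this is an entirely classical statement about root systems, and is the only place where the irreducibility of $R$ is used. All the analytic work is delegated to Proposition~\ref{prop:ChevalleyFormulas} and its corollary, and the exclusion of type $G_2$ is exactly what guarantees that the hypothesis $r+s \leq 3$ of that corollary is always satisfied in the inductive step.
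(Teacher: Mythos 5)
Your proof is correct and follows essentially the same route as the paper: induction on the height of a positive root, producing a simple root $\alpha'$ with $\alpha-\alpha'\in R_+$ and invoking Corollary~\ref{cor:SpanAlpha+Beta} (the paper cites Bourbaki VI.1.6 Proposition~19 for the existence of $\alpha'$, and leaves the non-proportionality check and the $G_2$ exclusion implicit where you spell them out). The closing appeal to the Iwahori decomposition is harmless but superfluous, since $I^+$ is defined in section~\ref{sub:IwaDec} as the group generated by $\{x_\alpha(\mathbb{Z}_p):\alpha\in R_+\}$, so the inclusion $I^+\subset H$ follows immediately once the induction is complete.
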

\begin{proof}
Let $H\subset I^{+}$ be the group spanned by $\left\{ x_{\alpha}(\mathbb{Z}_{p}):\alpha\in\Delta\right\} $.
We show by induction on $h(\gamma)\geq1$ that $x_{\gamma}(\mathbb{Z}_{p})\subset H$
for every $\gamma\in R_{+}$. If $h(\gamma)=1$, $\gamma$ already
belongs to $\Delta$ and there is nothing to prove. If $h(\gamma)>1$,
then by \cite[VI.1.6 Proposition 19]{BoLie46}, there is a simple
root $\alpha\in\Delta$ such that $\beta=\gamma-\alpha\in R_{+}$.
Then $h(\beta)=h(\gamma)-1$, thus by induction $x_{\beta}(\mathbb{Z}_{p})\subset H$.
Since also $x_{\alpha}(\mathbb{Z}_{p})\subset H$, $x_{\gamma}(\mathbb{Z}_{p})\subset H$
by Corollary~\ref{cor:SpanAlpha+Beta}.\end{proof}
\begin{lem}
\label{lem:SpanI-}The group generated by $I^{+}$ and $x_{-\alpha_{max}}(p\mathbb{Z}_{p})$
contains $I^{-}$.\end{lem}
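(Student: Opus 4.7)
Let $H$ denote the subgroup of $G$ generated by $I^+$ and $x_{-\alpha_{max}}(p\mathbb{Z}_p)$. Since $I^-$ is by definition generated by $\{x_{-\gamma}(p\mathbb{Z}_p):\gamma\in R_+\}$, it suffices to show that $x_{-\gamma}(p\mathbb{Z}_p)\subset H$ for every positive root $\gamma$. The plan is to mirror the proof of Lemma~\ref{lem:SpanI+}, but using a \emph{descending} induction on $h(\gamma)$: we start from the top (where we are given $x_{-\alpha_{max}}(p\mathbb{Z}_p)$ by hypothesis) and propagate downward through the positive roots by commuting with simple positive root groups, which already lie in $I^+\subset H$.

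Since $\mathbf{G}$ is simple, the root system $R$ is irreducible, so $\alpha_{max}$ is the unique root of maximal height. This gives the base case of the induction. For the inductive step, take $\gamma\in R_+$ with $\gamma\neq\alpha_{max}$. Dual to \cite[VI.1.6 Proposition 19]{BoLie46} (the statement invoked in Lemma~\ref{lem:SpanI+}), there exists a simple root $\alpha\in\Delta$ such that $\gamma':=\gamma+\alpha\in R_+$; equivalently, if no such $\alpha$ existed, $\gamma$ would be a highest-weight vector in the adjoint representation and so equal to $\alpha_{max}$, contradicting the hypothesis. Since $h(\gamma')=h(\gamma)+1$, the induction hypothesis gives $x_{-\gamma'}(p\mathbb{Z}_p)\subset H$.

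Now set $\beta:=-\gamma'$. The roots $\alpha$ and $\beta$ are non-proportional: equality $\alpha=\pm\beta$ would force $\gamma'=\mp\alpha$, which is incompatible with $\gamma'\in R_+$ and $\gamma=\gamma'-\alpha\in R_+$. Their sum $\alpha+\beta=-\gamma$ lies in $R$, and because $\mathbf{G}$ is not of type $G_2$ every root chain in $R$ has length at most $3$, so $r+s\leq 3$ in the notation of Proposition~\ref{prop:ChevalleyFormulas}. Applying Corollary~\ref{cor:SpanAlpha+Beta} with $a=0$ to the generators $x_\alpha(\mathbb{Z}_p)\subset I^+\subset H$ and $b=1$ to $x_\beta(p\mathbb{Z}_p)=x_{-\gamma'}(p\mathbb{Z}_p)\subset H$ yields $x_{-\gamma}(p\mathbb{Z}_p)=x_{\alpha+\beta}(p^{a+b}\mathbb{Z}_p)\subset H$, which closes the induction.

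The only real obstacle is the root-combinatorial input guaranteeing the simple root $\alpha$ with $\gamma+\alpha\in R_+$; everything else is a direct unwinding of the Chevalley commutator formulas already packaged in Corollary~\ref{cor:SpanAlpha+Beta}, whose applicability is exactly why the case $R$ of type $G_2$ must be treated separately in section~2.10.
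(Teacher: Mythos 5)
Your proof is correct and follows essentially the same strategy as the paper's: descending induction on $h(\gamma)$, passing from $x_{-\gamma'}(p\mathbb{Z}_p)$ with $\gamma'=\gamma+\alpha$ down to $x_{-\gamma}(p\mathbb{Z}_p)$ via Corollary~\ref{cor:SpanAlpha+Beta}. The only divergence is in the combinatorial input: the paper cites Bourbaki to produce a \emph{positive} root $\alpha$ (not necessarily simple) with $\gamma+\alpha\in R_+$ (which suffices, since $x_\alpha(\mathbb{Z}_p)\subset I^+\subset H$ for any $\alpha\in R_+$), whereas you insist $\alpha$ may be taken simple, justified by the highest-weight argument for the adjoint representation. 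That sharper statement is exactly what the paper's remark after the lemma flags as a plausible but unverified refinement, so your adjoint-representation observation cleanly settles it.
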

\begin{proof}
Let $H\subset I$ be the group spanned by $I^{+}$ and $x_{-\alpha_{max}}(p\mathbb{Z}_{p})$.
We show by descending induction on $h(\gamma)\geq1$ that $x_{-\gamma}(p\mathbb{Z}_{p})\subset H$
for every $\gamma\in R_{+}$. If $h(\gamma)=h(\alpha_{max})$, then
$\gamma=\alpha_{max}$ and there is nothing to prove. If $h(\gamma)<h(\alpha_{max})$,
then by \cite[VI.1.6 Proposition 19]{BoLie46}, there is a pair of
positive roots $\alpha,\beta$ such that $\beta=\gamma+\alpha$. Then
$h(\beta)=h(\gamma)+h(\alpha)>h(\gamma)$, thus by induction $x_{-\beta}(p\mathbb{Z}_{p})\subset H$.
Since also $x_{\alpha}(\mathbb{Z}_{p})\subset H$, $x_{-\gamma}(p\mathbb{Z}_{p})\subset H$
by Corollary~\ref{cor:SpanAlpha+Beta}.\end{proof}
\begin{rem}
From the Hasse diagrams in $\cite{Rg}$,
it seems that in the previous proof, we may always require $\alpha$
to be a simple root. 
\end{rem}
\begin{proof}
(Of theorem~\ref{thm:Main} for $\mathbf{G}$ simple, simply connected,
not of type $G_{2}$) By lemma~\ref{lem:SpanT}, \ref{lem:SpanI+},
\ref{lem:SpanI-} and the Iwahori decomposition of section~\ref{sub:IwaDec},
$I(1)$ is generated by 
\[
\left\{ x_{\alpha}(\mathbb{Z}_{p}):\alpha\in\Delta\right\} \cup\left\{ x_{-\alpha_{max}}(p\mathbb{Z}_{p})\right\} 
\]
thus topologically generated by 
\[
\left\{ x_{\alpha}(1):\alpha\in\Delta\right\} \cup\left\{ x_{-\alpha_{max}}(p)\right\} .
\]
None of these topological generators can be removed: the first ones
are contained in $I^{+}\subsetneq I(1)$, and all of them are needed
to span the image of 
\[
I(1)\twoheadrightarrow\mathbf{U}(\mathbb{F}_{p})\twoheadrightarrow\overline{\mathbf{U}}_{1}(\mathbb{F}_{p})\simeq\prod_{\alpha\in\Delta}\mathbf{U}_{\alpha}(\mathbb{F}_{p}),
\]
a surjective morphism that kills $x_{-\alpha_{max}}(p)$. 
\end{proof}

\subsection{~}\label{sub:caseofG2}

Let now $\mathbf{G}$ be simple of type $G_{2}$, thus $\Delta=\{\alpha,\beta\}$
with $\left\Vert \alpha\right\Vert <\left\Vert \beta\right\Vert $
and 
\[
R_{+}=\{\alpha,\beta,\beta+\alpha,\beta+2\alpha,\beta+3\alpha,2\beta+3\alpha\}.
\]
The whole root system looks like this:
 	
 \begin{center}
 	\begin{tikzpicture} [scale=.25]
 	\draw (0,0) -- (0:5) node [right] {$\alpha$} -- (7,4) node [right] { $3\alpha+\beta$}  -- cycle;
 	\draw (0,0) -- (0:-5) node [left] {$-\alpha$} -- (-7,4) node [below left] { $\beta$}  -- cycle;
 	\draw (0,0) -- (0:-5) -- (-7,4) -- cycle;
 	\draw (-5,0) --(-7,-4) node [above left] {$-3\alpha-\beta$}  --  (0,0) -- cycle;
 	\draw (5,0) --(7,-4) node [above right] {$-\beta$}  --  (0,0) -- cycle;
 	\draw (7,4) --(2.5,4) node [above right] {$2\alpha+\beta$}  --  (0,0) -- cycle;
 	\draw (-7,4) --(-2.5,4) node [above left] { $\alpha+\beta$}  --  (0,0) -- cycle;
 	\draw (-7,-4) --(-2.5,-4) node [below left]  {$-2\alpha-\beta$}  --  (0,0) -- cycle;
 	\draw (7,-4) --(2.5,-4) node [below right] { $-\alpha-\beta$}  --  (0,0) -- cycle;
 	\draw (2.5,-4) -- (0,-8) node [below] { $-3\alpha-2\beta$}  --  (0,0) -- cycle;
 	\draw (-2.5,4) -- (0,8) node [above] { $3\alpha+2\beta$}  --  (0,0) -- cycle;
 	\draw (0,8)  --  (2.5,4)  --  (0,0) -- cycle; 
 	\draw (0,-8)  --  (-2.5,-4)  --  (0,0) -- cycle;
 	
 	\end{tikzpicture}
 \end{center}
\begin{lem}
\label{lem:SpanI-G2}The group generated by $I^{+}$ and $x_{-2\beta-3\alpha}(p\mathbb{Z}_{p})$
contains $I^{-}$.\end{lem}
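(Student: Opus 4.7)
We mimic the proof of Lemma~\ref{lem:SpanI-}, but must be more careful because in $G_2$ every route to certain negative root subgroups passes through a root chain of length~$3$ (i.e.\ of type $(r,s)=(1,3)$), beyond the scope of Corollary~\ref{cor:SpanAlpha+Beta}. Let $H$ denote the group generated by $I^+$ and $x_{-(2\beta+3\alpha)}(p\mathbb{Z}_p)$; by the Iwahori decomposition of \S\ref{sub:IwaDec}, it suffices to show $x_{-\gamma}(p\mathbb{Z}_p)\subset H$ for each of the six positive roots $\gamma\in R_+=\{\alpha,\beta,\beta+\alpha,\beta+2\alpha,\beta+3\alpha,2\beta+3\alpha\}$.

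First I would dispatch the two ``easy'' cases via Corollary~\ref{cor:SpanAlpha+Beta}: the pairs $(\beta,-(2\beta+3\alpha))$ and $(\beta+3\alpha,-(2\beta+3\alpha))$ each have root chain of length~$2$ (type $(1,1)$), yielding $x_{-(\beta+3\alpha)}(p\mathbb{Z}_p)\subset H$ and $x_{-\beta}(p\mathbb{Z}_p)\subset H$. For the remaining three negative roots, no decomposition avoids a $(1,3)$ chain, so I would invoke the full $(1,3)$ expansion of Proposition~\ref{prop:ChevalleyFormulas}, choosing the pair each time so that three of the four resulting Chevalley factors happen to already lie in $H$. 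The pair $(\beta+2\alpha,-(2\beta+3\alpha))$ (chain $\{-(2\beta+3\alpha),-(\beta+\alpha),\alpha,\beta+3\alpha\}$) gives
\[
[x_{-(2\beta+3\alpha)}(pv),\,x_{\beta+2\alpha}(u)] = x_{-(\beta+\alpha)}(\pm puv)\cdot x_\alpha(\pm pu^2v)\cdot x_{\beta+3\alpha}(\pm pu^3v)\cdot x_{-\beta}(\pm pu^3v^2),
\]
whose last three factors lie in $H$ (the first two via $I^+$, the third by the previous paragraph); canceling them produces $x_{-(\beta+\alpha)}(p\mathbb{Z}_p)\subset H$.

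Similarly, the pair $(\beta+\alpha,-(2\beta+3\alpha))$ (chain $\{-(2\beta+3\alpha),-(\beta+2\alpha),-\alpha,\beta\}$) yields
\[
[x_{-(2\beta+3\alpha)}(pv),\,x_{\beta+\alpha}(u)] = x_{-(\beta+2\alpha)}(\pm puv)\cdot x_{-\alpha}(\pm pu^2v)\cdot x_{\beta}(\pm pu^3v)\cdot x_{-(\beta+3\alpha)}(\pm pu^3v^2),
\]
whose last two factors lie in $H$; cancellation leaves an element $A(u,v)=x_{-(\beta+2\alpha)}(\epsilon_1 puv)\cdot x_{-\alpha}(\epsilon_2 pu^2v)\in H$ in which the two remaining unknowns are coupled. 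Here I would exploit the fact that the $x_{-\alpha}$-coefficient is even in $u$ while the $x_{-(\beta+2\alpha)}$-coefficient is odd: the product $A(1,v)\cdot A(-1,v)^{-1}$ telescopes to $x_{-(\beta+2\alpha)}(2\epsilon_1 pv)$, and since $2\in\mathbb{Z}_p^\times$ (here $p\neq 2$ is essential) this gives $x_{-(\beta+2\alpha)}(p\mathbb{Z}_p)\subset H$; reinserting into $A(u,v)$ then yields $x_{-\alpha}(p\mathbb{Z}_p)\subset H$. The main obstacle throughout is the failure of Corollary~\ref{cor:SpanAlpha+Beta} for the $(1,3)$ chains ubiquitous in $G_2$; it is circumvented by pairing roots so that the parasitic factors already lie in $H$, together with the $u\mapsto -u$ trick needed to split the one remaining coupled pair.
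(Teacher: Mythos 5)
Your proof is correct, and for the final two roots it takes a genuinely different route from the paper's. The first three conclusions ($x_{-\beta-3\alpha}(p\mathbb{Z}_p)$, $x_{-\beta}(p\mathbb{Z}_p)$, $x_{-\beta-\alpha}(p\mathbb{Z}_p)\subset H$) are obtained from exactly the same three commutators with $x_{-2\beta-3\alpha}(pv)$ that the paper uses. For $x_{-\beta-2\alpha}$ and $x_{-\alpha}$, the paper shifts the base point and computes two further $(1,3)$-commutators $[x_{-\beta-3\alpha}(pv):x_{\alpha}(u)]$ and $[x_{-\beta-3\alpha}(pv):x_{\beta+2\alpha}(u)]$, each of which contains \emph{only one} factor not already known to lie in $H$, so no decoupling is ever needed. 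You instead stay based at $x_{-2\beta-3\alpha}(pv)$ and commute against $x_{\beta+\alpha}(u)$; this produces two unknown factors $x_{-\beta-2\alpha}(\epsilon_1 puv)\,x_{-\alpha}(\epsilon_2 pu^2v)$ which you then separate via the $u\mapsto -u$ telescoping $A(1,v)A(-1,v)^{-1}=x_{-\beta-2\alpha}(2\epsilon_1 pv)$, exploiting the opposite parities of the exponents and $2\in\mathbb{Z}_p^\times$. Both arguments are valid; yours is economical in using a single base commutator throughout, while the paper's choice of commutators avoids any need for the parity trick (and in particular never invokes invertibility of $2$ in this lemma). Minor typo on your side: in both displayed $(1,3)$-expansions the last factor should carry $p^2u^3v^2$ rather than $pu^3v^2$, since $v$ enters as $pv$; this doesn't affect the argument as the factor is still in $H$.
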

\begin{proof}
Let $H\subset I(1)$ be the group generated by $I^{+}$ and $x_{-2\beta-3\alpha}(p\mathbb{Z}_{p})$.
Then, for every $u,v\in\mathbb{Z}_{p}$, $H$ contains {\footnotesize{
\begin{eqnarray*}
[x_{-2\beta-3\alpha}(pv):x_{\beta}(u)] & = & x_{-\beta-3\alpha}(\pm puv)\\
{}[x_{-2\beta-3\alpha}(pv):x_{\beta+3\alpha}(u)] & = & x_{-\beta}(\pm puv)\\
{}[x_{-2\beta-3\alpha}(pv):x_{\beta+2\alpha}(u)] & = & x_{-\beta-\alpha}(\pm puv)\cdot x_{\alpha}(\pm pu^{2}v)\cdot x_{\beta+3\alpha}(\pm pu^{3}v)\cdot x_{-\beta}(\pm p^{2}u^{3}v^{2})
\end{eqnarray*}
}}It thus contains $x_{-\beta-3\alpha}(p\mathbb{Z}_{p})$, $x_{-\beta}(p\mathbb{Z}_{p})$
and $x_{-\beta-\alpha}(p\mathbb{Z}_{p})$, along with {\footnotesize{
\begin{eqnarray*}
[x_{-\beta-3\alpha}(pv):x_{\alpha}(u)] & = & x_{-\beta-2\alpha}(\pm puv)\cdot x_{-\beta-\alpha}(\pm pu^{2}v)\cdot x_{-\beta}(\pm pu^{3}v)\cdot x_{-2\beta-3\alpha}(\pm p^{2}u^{3}v^{2})\\
{}[x_{-\beta-3\alpha}(pv):x_{\beta+2\alpha}(u)] & = & x_{-\alpha}(\pm puv)\cdot x_{\beta+\alpha}(\pm pu^{2}v)\cdot x_{2\beta+3\alpha}(\pm pu^{3}v)\cdot x_{\beta}(\pm p^{2}u^{3}v^{2})
\end{eqnarray*}
}}It therefore also contains $x_{-\beta-2\alpha}(p\mathbb{Z}_{p})$
and $x_{-\alpha}(p\mathbb{Z}_{p})$.
\end{proof}
The filtration $(\mathbf{U}_{i})_{i\geq1}$ of $\mathbf{U}$ in section~\ref{sub:IwaDec}
induces a filtration 
\[
I^{+}=I_{1}^{+}\supset\cdots\supset I_{5}^{+}\supset I_{6}^{+}=1
\]
of $I^{+}=\mathbf{U}(\mathbb{Z}_{p})$ by normal subgroups $I_{i}^{+}=\mathbf{U}_{i}(\mathbb{Z}_{p})$
whose graded pieces 
\[
\overline{I}_{i}^{+}=\overline{\mathbf{U}}_{i}(\mathbb{Z}_{p})=I_{i}^{+}/I_{i+1}^{+}
\]
 are free $\mathbb{Z}_{p}$-modules, namely
\[
\begin{array}{c}
\overline{I}_{1}^{+}=\mathbb{Z}_{p}\cdot\overline{x}_{\alpha}\oplus\mathbb{Z}_{p}\cdot\overline{x}_{\beta},\qquad\overline{I}_{2}^{+}=\mathbb{Z}_{p}\cdot\overline{x}_{\alpha+\beta}\\
\overline{I}_{3}^{+}=\mathbb{Z}_{p}\cdot\overline{x}_{2\alpha+\beta},\qquad\overline{I}_{4}^{+}=\mathbb{Z}_{p}\cdot\overline{x}_{3\alpha+\beta},\qquad\overline{I}_{5}^{+}=\mathbb{Z}_{p}\cdot\overline{x}_{3\alpha+2\beta}
\end{array}
\]
 where $\overline{x}_{\gamma}$ is the image of $x_{\gamma}(1)$.
The commutator defines $\mathbb{Z}_{p}$-linear pairings 
\[
[-,-]_{i,j}:\overline{I}_{i}^{+}\times\overline{I}_{j}^{+}\rightarrow\overline{I}_{i+j}^{+}
\]
with $[y,x]_{j,i}=-[x,y]_{i,j}$, $[x,x]_{i,i}=0$ and, by Proposition~\ref{prop:ChevalleyFormulas},
\[
\begin{array}{c}
[\overline{x}_{\beta},\overline{x}_{\alpha}]=\pm\overline{x}_{\alpha+\beta},\quad[\overline{x}_{\alpha+\beta},\overline{x}_{\alpha}]=\pm2\overline{x}_{2\alpha+\beta},\quad[\overline{x}_{2\alpha+\beta},\overline{x}_{\alpha}]=\pm3\overline{x}_{3\alpha+\beta},\\{}
[\overline{x}_{\alpha+\beta},\overline{x}_{2\alpha+\beta}]=\pm x_{3\alpha+2\beta}\quad\mbox{and}\quad[\overline{x}_{\beta},\overline{x}_{3\alpha+\beta}]=\pm x_{2\alpha+2\beta}
\end{array}
\]
Let $H$ be the subgroup of $I^{+}$ generated by $x_{\alpha}(\mathbb{Z}_{p})$
and $x_{\beta}(\mathbb{Z}_{p})$ and denote by $H_{i}$ its image
in $I^{+}/I_{i+1}^{+}=G_{i}$. Then $H_{1}=G_{1}$, $H_{2}$ contains
$[\overline{x}_{\beta},\overline{x}_{\alpha}]=\pm\overline{x}_{\alpha+\beta}$
thus $H_{2}=G_{2}$, $H_{3}$ contains $[\overline{x}_{\alpha+\beta},\overline{x}_{\alpha}]=\pm2\overline{x}_{2\alpha+\beta}$
thus $H_{3}=G_{3}$ since $p\neq2$, $H_{4}$ contains $[\overline{x}_{2\alpha+\beta},\overline{x}_{\alpha}]=\pm3\overline{x}_{3\alpha+\beta}$
thus $H_{4}=G_{4}$ if $p\neq3$, in which case actually $H=H_{5}=G_{5}=I^{+}$
since $H$ always contains $[\overline{x}_{\alpha+\beta},\overline{x}_{2\alpha+\beta}]=\pm x_{3\alpha+2\beta}$. 

If $p=3$, let us also consider the exact sequence 
\[
0\rightarrow J_{4}\rightarrow G_{4}\rightarrow\overline{I}_{1}^{+}\rightarrow0
\]
The group $J_{4}=I_{2}^{+}/I_{5}^{+}$ is commutative, and in fact
again a free $\mathbb{Z}_{3}$-module: 
\[
J_{4}=(\mathbf{U}_{2}/\mathbf{U}_{5})(\mathbb{Z}_{p})=\mathbb{Z}_{3}\tilde{x}_{\alpha+\beta}\oplus\mathbb{Z}_{3}\tilde{x}_{2\alpha+\beta}\oplus\mathbb{Z}_{3}\overline{x}_{3\alpha+\beta}
\]
where $\tilde{x}_{\gamma}$ is the image of $x_{\gamma}(1)$. The
action by conjugation of $\overline{I}_{1}^{+}$ on $J_{4}$ is given
by 
\[
\overline{x}_{\alpha}\mapsto\left(\begin{array}{ccc}
1\\
\pm2 & 1\\
\pm3 & \pm3 & 1
\end{array}\right)\quad\overline{x}_{\beta}\mapsto\left(\begin{array}{ccc}
1\\
 & 1\\
 &  & 1
\end{array}\right)
\]
in the indicated basis of $J_{4}$. The $\mathbb{Z}_{3}$-submodule
$H'_{4}=H_{4}\cap J_{4}$ of $J_{4}$ satisfies 
\[
H'_{4}+\mathbb{Z}_{3}\overline{x}_{3\alpha+\beta}=J_{4}\quad\mbox{and}\quad3\overline{x}_{3\alpha+\beta}\in H'_{4}.
\]
Naming signs $\epsilon_{i}\in\{\pm1\}$ in formula $(1,3)$ of proposition~\ref{prop:ChevalleyFormulas},
we find that $H'_{4}$ contains 
\[
\epsilon_{1}uv\cdot\tilde{x}_{\alpha+\beta}+\epsilon_{2}u^{2}v\cdot\tilde{x}_{2\alpha+\beta}+\epsilon_{3}u^{3}v\cdot\overline{x}_{3\alpha+\beta}
\]
for every $u,v\in\mathbb{Z}_{3}$. Adding these for $v=1$ and $u=\pm1$,
we obtain 
\[
\tilde{x}_{2\alpha+\beta}\in H'_{4}.
\]
It follows that $H'_{4}$ actually contains the following $\mathbb{Z}_{3}$-submodule
of $J_{4}$: 
\[
J'_{4}=\left\{ a\cdot\tilde{x}_{\alpha+\beta}+b\cdot\tilde{x}_{2\alpha+\beta}+c\cdot\overline{x}_{3\alpha+\beta}:a,b,c\in\mathbb{Z}_{3},\,\epsilon_{1}a\equiv\epsilon_{3}c\bmod3\right\} .
\]
Now observe that $J'_{4}$ is a normal subgroup of $G_{4}$, and the
induced exact sequence
\[
0\rightarrow J_{4}/J'{}_{4}\rightarrow G_{4}/J'_{4}\rightarrow\overline{I}_{1}^{+}\rightarrow0
\]
is an \emph{abelian} extension of $\overline{I}_{1}^{+}\simeq\mathbb{Z}_{3}^{2}$
by $J_{4}/J'_{4}\simeq\mathbb{F}_{3}$. Since $H_{4}/J'_{4}$ is topologically
generated by two elements and surjects onto $\overline{I}_{1}^{+}$,
it actually defines a splitting: 
\[
G_{4}/J'_{4}=H_{4}/J'_{4}\oplus J_{4}/J'_{4}.
\]
Thus $H'_{4}=J'_{4}$, $H_{4}$ is a normal subgroup of $G_{4}$,
$H$ is a normal subgroup of $I^{+}$ and 
\[
I^{+}/H\simeq G_{4}/H_{4}\simeq J_{4}/J'_{4}\simeq\mathbb{F}_{3}
\]
is generated by the class of $x_{\alpha+\beta}(1)$ or $x_{3\alpha+\beta}(1)$.
We have shown:
\begin{lem}
\label{lem:SpanI+G2}The group $I^{+}$ is spanned by $x_{\alpha}(\mathbb{Z}_{p})$
and $x_{\beta}(\mathbb{Z}_{p})$ plus $x_{\alpha+\beta}(1)$ if $p=3$. \end{lem}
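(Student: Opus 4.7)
The plan is to show that the subgroup $H \subseteq I^+$ generated by $x_\alpha(\mathbb{Z}_p)$, $x_\beta(\mathbb{Z}_p)$, and (if $p=3$) $x_{\alpha+\beta}(1)$ coincides with $I^+$, by climbing the filtration $I^+ = I_1^+ \supset \cdots \supset I_6^+ = 1$ step by step. I will track the image $H_i$ of $H$ inside $G_i := I^+/I_{i+1}^+$ and show that $H_i = G_i$ at every level. The key ingredients are all already in hand: the commutator formulas of Proposition \ref{prop:ChevalleyFormulas} applied in the graded pieces $\overline{I}_i^+$, and the fact that $p$ is odd.

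First, I would run the chain of commutators $[\overline{x}_\beta,\overline{x}_\alpha] = \pm \overline{x}_{\alpha+\beta}$, $[\overline{x}_{\alpha+\beta},\overline{x}_\alpha] = \pm 2 \overline{x}_{2\alpha+\beta}$, $[\overline{x}_{2\alpha+\beta},\overline{x}_\alpha] = \pm 3 \overline{x}_{3\alpha+\beta}$, and finally $[\overline{x}_{\alpha+\beta},\overline{x}_{2\alpha+\beta}] = \pm \overline{x}_{3\alpha+2\beta}$. Because $p \neq 2$, the coefficient $2$ is a unit and $H_3 = G_3$ unconditionally. If $p \neq 3$ as well, then $3$ is a unit, so $\overline{x}_{3\alpha+\beta}$ lies in $H_4$, giving $H_4 = G_4$, and the last commutator then produces $\overline{x}_{3\alpha+2\beta}$, so $H = I^+$ with no extra generator needed.

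The only delicate case, which is the main obstacle, is $p = 3$, where the pairing $[\overline{x}_{2\alpha+\beta},\overline{x}_\alpha]$ collapses modulo $p$ and yields only $3 \overline{x}_{3\alpha+\beta}$. Here I use the abelian subgroup $J_4 = I_2^+/I_5^+$ of $G_4$ together with the full $(1,3)$ Chevalley relation for $[x_\beta(v),x_\alpha(u)]$, which couples $\tilde{x}_{\alpha+\beta}$, $\tilde{x}_{2\alpha+\beta}$ and $\overline{x}_{3\alpha+\beta}$ simultaneously. Evaluating at $v=1$, $u = \pm 1$ and adding isolates $\tilde{x}_{2\alpha+\beta} \in H'_4 := H_4 \cap J_4$, and a short inspection identifies $H'_4$ with the index-$3$ submodule $J'_4 \subset J_4$ cut out by $\epsilon_1 a \equiv \epsilon_3 c \bmod 3$. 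The quotient $G_4/J'_4$ is then an \emph{abelian} extension of $\overline{I}_1^+ \simeq \mathbb{Z}_3^2$ by $\mathbb{F}_3$, and $H_4/J'_4$ — being topologically generated by two elements and surjecting onto $\overline{I}_1^+$ — must split it off, so $H_4 \cap J_4 = J'_4$ exactly.

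Finally, I conclude that $H$ is normal in $I^+$ with $I^+/H \simeq J_4/J'_4 \simeq \mathbb{F}_3$, and observe that the class of $x_{\alpha+\beta}(1)$ maps to $\tilde{x}_{\alpha+\beta} \bmod J'_4$, which is a nonzero element of $\mathbb{F}_3$ by the defining congruence. Therefore adjoining $x_{\alpha+\beta}(1)$ to $H$ kills the obstruction and produces all of $I^+$, establishing the lemma. The whole argument is really the synthesis of the preceding detailed analysis; the novelty is only the observation that $x_{\alpha+\beta}(1)$ is a convenient lift of a generator of the Frattini-type quotient $I^+/H$.
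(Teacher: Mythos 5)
Your proposal is correct and follows essentially the same approach as the paper: the same filtration $I^+ = I_1^+ \supset \cdots \supset I_6^+$, the same use of the Chevalley commutator formulas to climb through $G_i = I^+/I_{i+1}^+$, and for $p=3$ the same analysis of $J_4 = I_2^+/I_5^+$, the same extraction of $\tilde{x}_{2\alpha+\beta}$ by evaluating the $(1,3)$ relation at $u = \pm 1$, the same identification $H'_4 = J'_4$ via the abelian-extension/splitting argument, and the same conclusion that $I^+/H \simeq \mathbb{F}_3$ is killed by adjoining $x_{\alpha+\beta}(1)$.
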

\begin{proof}
(Of theorem~\ref{thm:Main} for $\mathbf{G}$ simple of type $G_{2}$)
By lemma~\ref{lem:SpanT}, \ref{lem:SpanI-G2}, \ref{lem:SpanI+G2}
and the Iwahori decomposition of section~\ref{sub:IwaDec}, the pro-$p$-Iwahori
$I(1)$ is generated by $x_{\alpha}(\mathbb{Z}_{p})$, $x_{\beta}(\mathbb{Z}_{p})$,
$x_{-2\beta-3\alpha}(p\mathbb{Z}_{p})$, along with $x_{\alpha+\beta}(1)$
if $p=3$. It is therefore topologically generated by $x_{\alpha}(1)$,
$x_{\beta}(1)$, $x_{-2\beta-3\alpha}(p)$, along with $x_{\alpha+\beta}(1)$
if $p=3$. The surjective reduction morphism $I(1)\twoheadrightarrow\mathbf{U}(\mathbb{F}_{p})\twoheadrightarrow\overline{\mathbf{U}}_{1}(\mathbb{F}_{p})$
shows that the first two generators can not be removed. The third
one also can not, since all the others belong to the closed subgroup
$I_{+}\subsetneq I(1)$. Finally, suppose that $p=3$ and consider
the extension 
\[
1\rightarrow\mathbf{U}_{2}/\mathbf{U}_{5}\rightarrow\mathbf{U}/\mathbf{U}_{5}\rightarrow\mathbf{U}/\mathbf{U}_{1}\rightarrow1
\]
With notations as above, the reduction of 
\[
J'_{4}\subset J_{4}=\mathbf{U}_{2}(\mathbb{Z}_{3})/\mathbf{U}_{5}(\mathbb{Z}_{3})=(\mathbf{U}_{2}/\mathbf{U}_{5})(\mathbb{Z}_{3})
\]
is a normal subgroup $Y$ of $X=(\mathbf{U}/\mathbf{U}_{5})(\mathbb{F}_{3})$
with quotient $X/Y\simeq\mathbb{F}_{3}^{3}$. The surjective reduction
morphism 
\[
I(1)\twoheadrightarrow\mathbf{U}(\mathbb{F}_{3})\twoheadrightarrow\mathbf{U}(\mathbb{F}_{3})/\mathbf{U}_{5}(\mathbb{F}_{3})=X\twoheadrightarrow X/Y
\]
then kills $x_{-2\beta-3\alpha}(p)$. The fourth topological generator
$x_{\alpha+\beta}(1)$ of $I(1)$ thus also can not be removed, since
the first two certainly do not span $X/Y\simeq\mathbb{F}_{3}^{3}$.
\end{proof}

\subsection{~}\label{sub:Isogenies}

We now return to an arbitrary split reductive group $\mathbf{G}$
over $\mathbb{Z}_{p}$. Let 
\[
\mathbf{G}^{sc}\twoheadrightarrow\mathbf{G}^{der}\hookrightarrow\mathbf{G}\twoheadrightarrow\mathbf{G}^{ad}
\]
be the simply connected cover $\mathbf{G}^{sc}$ of the derived group
$\mathbf{G}^{der}$ of $\mathbf{G}$, and the adjoint group $\pi:\mathbf{G}\twoheadrightarrow\mathbf{G}^{ad}$
of $\mathbf{G}$. Then 
\[
\left(\mathbf{T}^{ad},M^{ad},R^{ad},\Delta^{ad},\left(X_{\alpha}^{ad}\right)_{\alpha\in\Delta^{ad}}\right)=\left(\pi(\mathbf{T}),\mathbb{Z}R,R,\Delta,\left(\pi(X_{\alpha})\right)_{\alpha\in\Delta}\right)
\]
is a pinning of $\mathbf{G}^{ad}$ and this construction yields a
bijection between pinnings of $\mathbf{G}$ and pinnings of $\mathbf{G}^{ad}$.
Applying this to $\mathbf{G}^{sc}$ or $\mathbf{G}^{der}$, we obtain
pinnings 
\[
\left(\mathbf{T}^{sc},M^{sc},R^{sc},\Delta^{sc},\left(X_{\alpha}^{sc}\right)_{\alpha\in\Delta^{sc}}\right)\quad\mbox{and}\quad\left(\mathbf{T}^{der},M^{der},R^{der},\Delta^{der},\left(X_{\alpha}^{der}\right)_{\alpha\in\Delta^{sc}}\right)
\]
for $\mathbf{G}^{sc}$ and $\mathbf{G}^{der}$: all of the above constructions
then apply to $\mathbf{G}^{ad}$, $\mathbf{G}^{sc}$ or $\mathbf{G}^{der}$,
and we will denote with a subscript $ad$, $sc$ or $der$ for the
corresponding objects. For instance, we have a sequence of Iwahori
(resp. pro-$p$-Iwahori) subgroups 
\[
I^{sc}\rightarrow I^{der}\hookrightarrow I\rightarrow I^{ad}\quad\mbox{and}\quad I^{sc}(1)\rightarrow I^{der}(1)\hookrightarrow I(1)\rightarrow I^{ad}(1).
\]

\subsection{~}

The action of $\mathbf{G}$ on itself by conjugation factors through
a morphism 
\[
\Ad:\mathbf{G}^{ad}\rightarrow\Aut(\mathbf{G}).
\]
For $b\in\mathbf{B}^{ad}(\mathbb{F}_{p})$, $\Ad(b)(\mathbf{B}_{\mathbb{F}_{p}})=\mathbf{B}_{\mathbb{F}_{p}}$
and $\Ad(b)(\mathbf{U}_{\mathbb{F}_{p}})=\mathbf{U}_{\mathbb{F}_{p}}$.
We thus obtain an action of the Iwahori subgroup $I^{ad}$ of $G^{ad}=\mathbf{G}^{ad}(\mathbb{Q}_{p})$
on $I$ or $I(1)$. Similar consideration of course apply to $\mathbf{G}^{sc}$
and $\mathbf{G}^{der}$, and the sequence 
\[
I^{sc}(1)\rightarrow I^{der}(1)\hookrightarrow I(1)\rightarrow I^{ad}(1)
\]
is equivariant for these actions of $I^{ad}=I^{ad}(1)\rtimes T_{tors}^{ad}$.

\subsection{~}

Let $J$ be the image of $I^{sc}(1)\rightarrow I(1)$, so that $J$
is a normal subgroup of $I$. From the compatible Iwahori decompositions
for $I(1)$ and $I^{sc}(1)$ in section~\ref{sub:IwaDec}, we see
that $T(1)\hookrightarrow I(1)$ induces a $T^{ad}$-equivariant isomorphism
\[
T(1)/T(1)\cap J\rightarrow I(1)/J.
\]
Since the inverse image of $\mathbf{T}(\mathbb{Z}_{p})$ in $\mathbf{G}^{sc}(\mathbb{Z}_{p})$
equals $\mathbf{T}^{sc}(\mathbb{Z}_{p})$ and since also 
\[
T^{sc}(1)=\mathbf{T}^{sc}(\mathbb{Z}_{p})\cap I^{sc}(1),
\]
we see that $T(1)\cap J$ is the image of $T^{sc}(1)\rightarrow T(1)$.
Also, the kernel of $I^{sc}(1)\rightarrow I(1)$ equals
$Z\cap I^{sc}(1)$ where 
\[
Z=\ker(\mathbf{G}^{sc}\rightarrow\mathbf{G})(\mathbb{Z}_{p})=\ker(\mathbf{T}^{sc}\rightarrow\mathbf{T})(\mathbb{Z}_{p}).
\]
Therefore $Z\cap I^{sc}(1)$ is the kernel of $T^{sc}(1)\rightarrow T(1)$,
which is trivial since $Z$ is finite and $T^{sc}(1)\simeq\Hom(M^{sc},1+p\mathbb{Z}_{p})$
has no torsion. We thus obtain exact sequences
\[
\begin{array}{ccccccccc}
1 & \rightarrow & T^{sc}(1) & \rightarrow & T(1) & \rightarrow & Q & \rightarrow & 0\\
 &  & \cap &  & \cap &  & \parallel\\
1 & \rightarrow & I^{sc}(1) & \rightarrow & I(1) & \rightarrow & Q & \rightarrow & 0
\end{array}
\]
where the cokernel $Q$ is the finitely generated $\mathbb{Z}_{p}$-module
\[
Q=\left(M^{\vee}/\mathbb{Z}R^{\vee}\right)\otimes\left(1+p\mathbb{Z}_{p}\right).
\]

\begin{rem}
If $\mathbf{G}$ is simple, then $M^{\vee}/\mathbb{Z}R^{\vee}$ is
a finite group of order $c$, with $c\mid\ell+1$ if $\mathbf{G}$
is of type $A_{\ell}$, $c\mid3$ if $\mathbf{G}$ is of type $E_{6}$
and $c\mid4$ in all other cases. Thus $Q=0$ and $I^{sc}(1)=I(1)$
unless $\mathbf{G}$ is of type $A_{\ell}$ with $p\mid c\mid\ell+1$
or $p=3$ and $\mathbf{G}$ is adjoint of type $E_{6}$. In these
exceptional cases, $M^{\vee}/\mathbb{Z}R^{\vee}$ is cyclic, thus
$Q\simeq\mathbb{F}_{p}$. 
\end{rem}

\subsection{~}

It follows that $I(1)$ is generated by $I^{sc}(1)$ and $s(1+p\mathbb{Z}_{p})$
for $s\in\mathcal{S}$, thus topologically generated by $I^{sc}(1)$
and $s(1+p)$ for $s\in\mathcal{S}$. In view of the results already
established in the simply connected case, this shows that the elements
listed in $(1-4)$ of Theorem~\ref{thm:Main} indeed form a set of
topological generators for $I(1)$. 

None of the semi-simple elements in $(1)$ can be removed: they are
all needed to generate the above abelian quotient $Q$ of $I(1)$
which indeed kills the unipotent generators in $(2-4)$. Likewise,
none of the unipotent elements in $(2)$ can be removed: they are
all needed to generate the abelian quotient 
\[
I(1)\twoheadrightarrow\mathbf{U}(\mathbb{F}_{p})\twoheadrightarrow\overline{\mathbf{U}}_{1}(\mathbb{F}_{p})\simeq\prod_{\alpha\in\Delta}\mathbf{U}_{\alpha}(\mathbb{F}_{p})
\]
which kills the other generators in $(1)$, $(3)$ and $(4)$. One
checks easily using the Iwahori decomposition of $I(1)$ and the product
decomposition $\mathbf{U}^{-}=\prod_{c\in\mathcal{C}}\mathbf{U}_{c}^{-}$
that none of the unipotent elements in $(3)$ can be removed. Finally
if $p=3$ and $d\in\mathcal{D}$, the central isogeny $\mathbf{G}^{sc}\rightarrow\mathbf{G}^{ad}$
induces an isomorphism $\mathbf{G}_{d}^{sc}\rightarrow\mathbf{G}_{d}^{ad}$
between the simple (simply connected \emph{and }adjoint) components
corresponding to $d$, thus also an isomorphism between the corresponding
pro-$p$-Iwahori's $I_{d}^{sc}(1)\rightarrow I_{d}^{ad}(1)$. In particular,
the projection $I(1)\rightarrow I^{ad}(1)\twoheadrightarrow I_{d}^{ad}(1)$
is surjective. Composing it with the projection $I_{d}^{ad}(1)\twoheadrightarrow\mathbb{F}_{3}^{3}$
constructed in section~\ref{sub:caseofG2}, we obtain an abelian
quotient $I(1)\twoheadrightarrow\mathbb{F}_{3}^{3}$ that kills all
of our generators except $x_{\alpha}(1)$, $x_{\beta}(1)$ and $x_{\alpha+\beta}(1)$
where $\Delta_{d}=\{\alpha,\beta\}$. In particular, the generator
$x_{\alpha+\beta}(1)$ from $(4)$ is also necessary. This finishes the proof of Theorem $\ref{thm:Main}$.

\subsection{~}

The action of $I^{ad}=I^{ad}(1)\rtimes T_{tors}^{ad}$ on $I(1)$
induces an $\mathbb{F}_{p}$-linear action of 
\[
T_{tors}^{ad}=\Hom\left(M^{ad},\mu_{p-1}\right)=\Hom\left(\mathbb{Z}R,\mathbb{F}_{p}^{\times}\right)
\]
on the Frattini quotient $\tilde{I}(1)$ of $I(1)$. Our minimal set
of topological generators of $I(1)$ reduces to an eigenbasis of $\tilde{I}(1)$,
i.e. an $\mathbb{F}_{p}$-basis of $\tilde{I}(1)$ made of eigenvectors
for the action of $T_{tors}^{ad}$. We denote by $\mathbb{F}_{p}(\alpha)$
the $1$-dimensional representation of $T_{tors}^{ad}$ on $\mathbb{F}_{p}$
defined by $\alpha\in\mathbb{Z}R$. We thus obtain:
\begin{cor}\label{cor:submain}
The $\mathbb{F}_{p}[T_{tors}^{ad}]$-module $\tilde{I}(1)$ is isomorphic
to
\[
\mathbb{F}_{p}^{\sharp \mathcal S}\oplus\Big(\oplus_{\alpha\in\Delta}\mathbb{F}_{p}(\alpha)\Big)\oplus\Big(\oplus_{c\in\mathcal{C}}\mathbb{F}_{p}(-\alpha_{c,max})\Big)\color{magenta}\Big(\oplus\Big(\oplus_{d\in\mathcal{D}}\mathbb{F}_{p}(\delta_{c})\Big)\mbox{\, if }p=3\Big).
\]

\end{cor}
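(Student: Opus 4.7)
The plan is to exploit the general fact that for a pro-$p$ group, any minimal set of topological generators reduces modulo the Frattini subgroup to an $\mathbb{F}_p$-basis of the Frattini quotient; then I simply compute the $T_{tors}^{ad}$-eigencharacter of each of the four families of generators provided by Theorem~\ref{thm:Main}. Since the $T_{tors}^{ad}$-action described in section~2.12 preserves the Frattini subgroup of $I(1)$, it descends to an $\mathbb{F}_p$-linear action on $\tilde{I}(1)$, and all that remains is to identify each basis vector as an eigenvector and record its character.

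The key input is the pinning identity
\[
\Ad(t)\bigl(x_\gamma(u)\bigr) = x_\gamma\bigl(\gamma(t)\cdot u\bigr)
\]
valid for every root $\gamma \in R$, every $u \in \mathbb{Z}_p$ and every $t \in \mathbf{T}^{ad}(\mathbb{Z}_p)$. Specialising to $t \in T_{tors}^{ad} = \Hom(\mathbb{Z}R, \mathbb{F}_p^\times)$ exhibits $x_\gamma(u)$ as a $T_{tors}^{ad}$-eigenvector with character $\gamma \in \mathbb{Z}R = M^{ad}$. Reading off this character for the three unipotent families of Theorem~\ref{thm:Main} yields the summands $\oplus_{\alpha \in \Delta}\mathbb{F}_p(\alpha)$ from the simple-root generators, $\oplus_{c \in \mathcal{C}}\mathbb{F}_p(-\alpha_{c,max})$ from the negative-highest-root generators and, when $p=3$, $\oplus_{d \in \mathcal{D}}\mathbb{F}_p(\delta_d)$ from the $G_2$ generators.

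For the semisimple family $\{s(1+p) : s \in \mathcal{S}\}$ I verify that $T_{tors}^{ad}$ acts trivially on $T(1)$: the action of $\mathbf{T}^{ad}$ on $\mathbf{G}$ is defined through $\Ad : \mathbf{G}^{ad} \to \Aut(\mathbf{G})$ and $\mathbf{T}^{ad}$ is the image of the abelian subgroup $\mathbf{T}$, so conjugation by (a local lift of) $t \in \mathbf{T}^{ad}$ is trivial on $\mathbf{T}$. Consequently each $s(1+p)$ spans a one-dimensional trivial $T_{tors}^{ad}$-subrepresentation, producing the summand $\mathbb{F}_p^{\sharp\mathcal{S}}$.

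Assembling the four contributions gives the claimed isomorphism of $\mathbb{F}_p[T_{tors}^{ad}]$-modules. The argument is essentially bookkeeping once the pinning identity and the triviality of the $\mathbf{T}^{ad}$-action on $\mathbf{T}$ are in hand; the only mild subtlety is the latter, which rests on the commutativity of the torus.
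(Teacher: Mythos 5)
Your proposal follows exactly the paper's own argument: the paper observes that the minimal generating set of Theorem~\ref{thm:Main} reduces to a $T_{tors}^{ad}$-eigenbasis of $\tilde I(1)$ and then records the eigencharacters, which is precisely your bookkeeping via the pinning identity $\Ad(t)(x_\gamma(u))=x_\gamma(\gamma(t)u)$ together with the triviality of $\Ad(\mathbf{T}^{ad})$ on $\mathbf{T}$. You have merely made explicit two facts the paper leaves tacit, so the two proofs coincide in substance.
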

\noindent Here $\sharp \mathcal{S}$ denotes the cardinality of the set $\mathcal{S}$. The map $\alpha\mapsto\mathbb{F}_{p}(\alpha)$ yields a
bijection between $\mathbb{Z}R/(p-1)\mathbb{Z}R$ and the isomorphism
classes of simple $\mathbb{F}_{p}[T_{tors}^{ad}]$-modules. In particular some of the simple modules in the
previous corollary may happen to be isomorphic. For instance if $\mathbf{G}$
is simple of type $B_{\ell}$ and $p=3$, then $-\alpha_{max}\equiv\alpha\bmod2$
where $\alpha\in\Delta$ is a long simple root. An inspection
of the tables in \cite{BoLie46}  yields the following:
\begin{cor}\label{cor:good}
If $\mathbf{G}$ is simple, the $\mathbb{F}_{p}[T_{tors}^{ad}]$-module
$\tilde{I}(1)$ is multiplicity free unless $p=3$ and $\mathbf{G}$
is of type $A_{1}$, $B_{\ell}$ or $C_{\ell}$ ($\ell\geq2$), $F_{4}$
or $G_{2}$. 
\end{cor}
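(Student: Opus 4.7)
The plan is to read off $\tilde I(1)$ as an $\mathbb{F}_p[T^{ad}_{tors}]$-module directly from Corollary~\ref{cor:submain} and to decide case by case when the listed simple summands contain a repetition. Since $\mathbb{F}_p(\beta)\simeq\mathbb{F}_p(\gamma)$ if and only if $\beta\equiv\gamma\pmod{(p-1)\mathbb{Z}R}$, the module $\tilde I(1)$ is multiplicity free if and only if $\#\mathcal{S}\leq 1$ and the characters appearing in the decomposition --- namely, when $\mathbf{G}$ is simple, the simple roots $\alpha\in\Delta$, the single class $-\alpha_{\max}$, and, if $p=3$ and $\mathbf{G}$ is of type $G_2$, the element $\delta=\alpha+\beta$, together with the trivial character if $\#\mathcal{S}=1$ --- are pairwise distinct in $\mathbb{Z}R/(p-1)\mathbb{Z}R$.

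I would first dispose of three easy potential collisions. The group $M^\vee/\mathbb{Z}R^\vee$ embeds into the fundamental group of $R$, which is cyclic except in type $D_{2k}$, where it is $(\mathbb{Z}/2)^{\oplus 2}$; tensoring with $\mathbb{F}_p$ for odd $p$ yields a space of dimension at most $1$, so $\#\mathcal{S}\leq 1$ automatically. Next, two distinct simple roots $\alpha_i\neq\alpha_j$ differ by an element of $\mathbb{Z}R$ whose $\Delta$-coordinates contain a $\pm 1$, so $\alpha_i\equiv\alpha_j\pmod{(p-1)\mathbb{Z}R}$ would force $(p-1)\mid 1$, impossible for $p\geq 3$; by the same coordinate argument no simple root is trivial modulo $(p-1)\mathbb{Z}R$. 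Finally, $-\alpha_{\max}\equiv 0$ would require $(p-1)$ to divide every mark $n_i$ of the highest root, but every simple type has at least one odd mark and all marks are bounded by $6$, ruling this out for every odd $p$.

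This reduces the problem to deciding when $-\alpha_{\max}\equiv\alpha_j\pmod{(p-1)\mathbb{Z}R}$ for some simple root $\alpha_j$. Comparing $\Delta$-coordinates, this translates into $(p-1)\mid n_j+1$ together with $(p-1)\mid n_i$ for every $i\neq j$. Since every mark is positive and bounded by $6$, a direct inspection of the Bourbaki list of marks rules out any solution once $p\geq 5$. For $p=3$ the condition simplifies to: exactly one mark of $\alpha_{\max}$ is odd. Reading off the tables, this holds precisely in types $A_1$ (mark $1$), $B_\ell$ (marks $1,2,\ldots,2$) and $C_\ell$ (marks $2,\ldots,2,1$) for $\ell\geq 2$, $F_4$ (marks $2,3,4,2$), and $G_2$ (marks $3,2$); it fails in types $A_\ell$ with $\ell\geq 2$, $D_\ell$ with $\ell\geq 4$, and $E_6,E_7,E_8$, each of whose highest root has at least three odd marks. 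Since $\mathcal{D}\neq\emptyset$ only in type $G_2$, which is already in the exceptional list, the extra character $\delta$ appearing when $p=3$ needs no separate check. The main obstacle is the bookkeeping of this case-by-case inspection, but after the preliminary reductions the exceptional list emerges directly from the marks of the highest root.
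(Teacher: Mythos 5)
Your argument is correct and is essentially a careful write-up of the table inspection the paper invokes: you reduce the question to whether $-\alpha_{\max}\equiv\alpha_j\pmod{(p-1)\mathbb{Z}R}$ for some simple $\alpha_j$, translate this into the divisibility conditions $(p-1)\mid n_j+1$ and $(p-1)\mid n_i$ for $i\neq j$, and then read off the Bourbaki marks, exactly in the spirit of the paper's one illustrated case (type $B_\ell$ at $p=3$, $-\alpha_{\max}\equiv\alpha_1\bmod 2$). The preliminary reductions ($\#\mathcal{S}\le 1$ for odd $p$, simple roots pairwise non-congruent and nontrivial, $-\alpha_{\max}\not\equiv 0$) and the resulting criterion (``exactly one odd mark'' when $p=3$, nothing for $p\ge 5$) are all sound and yield the same exceptional list.
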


 In the next section we use this result to construct Galois representations landing in $I^{ad}$ with image containing $I^{ad}(1)$.
\section{The Construction of Galois Representations}

Let $\mathbf{G}$ be a split simple adjoint group over $\mathbb{Z}_{p}$
and let $I(1)$ and $I=I(1)\rtimes T_{tors}$ be the corresponding
Iwahori groups, as defined in the previous section. We want here to construct Galois representations of a certain type with values in $I$ with image containing $I(1)$. After a short review of $p$-rational fields in section $3.1$, we establish a criterion for the existence of our representations in sections $3.2$ and $3.3$ and finally give some examples in section $3.4$.

\subsection{~} 
Let $K$ be a number field, $r_2(K)$ the number of complex primes of $K$, $\Sigma_p$  the set of primes of $K$ lying above $p$, $M$ the compositum of all finite $p$-extensions of $K$ which are unramified outside $\Sigma_p$, $M^{ab}$ the maximal abelian extension of
$K$ contained in $M$,  and $L$  the compositum of all cyclic extensions of $K$ of degree $p$ which are
contained in $M$ or $M^{ab}$.  If we let $\Gamma$ denote Gal$(M/K)$, then $\Gamma$ is a pro-$p$ group, $\Gamma^{ab}\cong$ Gal$(M^{ab}/K)$  is the maximal abelian quotient of $\Gamma$, and $\tilde{\Gamma}\cong \Gamma^{ab}/p\Gamma^{ab} \cong  \text{Gal}(L/K)$ is the Frattini quotient of $\Gamma$.\\

\textbf{Definition} \textit{A number field} $K$ \textit{is} $p$\textit{-rational if the following
equivalent conditions are satisfied:}
\begin{flushleft}
$(1)$ \textit{rank}$_{\mathbb{Z}_p}(\Gamma^{ab})=r_2(K)+1$ \textit{and} $\Gamma^{ab}$  \textit{is torsion-free as a} $\mathbb{Z}_p$\textit{-module},\\
$(2)$ $\Gamma$ \textit{is a free pro-}$p$ \textit{group with} $r_2(K)+1$ \textit{generators,}\\
$(3)$ $\Gamma$ \textit{is a free pro-}$p$ \textit{group}.
\end{flushleft}
The equivalence of $(1),(2)$ and $(3)$ follows from $\cite{Moh}$, see also proposition $3.1$ and the discussion before remark 3.2 of $\cite{Green}$. There is a considerable literature concerning $p$-rational fields, including $\cite{Mov},\cite{JaNg}$.\\

\textbf{Examples:} 

$(1)$ Suppose that $K$ is a quadratic field and that either $p \geq 5$ or $p=3$ and is unramified in $K/\mathbb{Q}$. If $K$ is real, then $K$
is $p$-rational if and only if $p$ does not divide the class number of $K$ and the fundamental unit of $K$ is not a $p$-th power in the completions $K_v$ of $K$ at the places $v$ above $p$. On the other hand, if $K$ is complex and $p$ does not divide the class number of $K$, then $K$ is a $p$-rational field (cf. proposition $4.1$ of $\cite{Green}$). However, there are $p$-rational complex $K$'s for which $p$ divides the class number (cf. chapter $2$, section $1$, p. $25$ of $\cite{Thesis}$). For similar results, see also $\cite{Fuji}$ and  $\cite{Min}$ if $K$ is complex.

$(2)$ Let $K=\mathbb{Q}(\mu_{p})$. If $p$ is a regular prime, then $K$ is a $p$-rational field (cf. \cite{Shav}, see also \cite{Green}, proposition $4.9$ for a shorter proof).\\

\subsection{~}
Suppose that $K$ is Galois over $\mathbb{Q}$ and $p$-rational with
$p\nmid[K:\mathbb{Q}]$. 

Since $K$ is Galois over $\mathbb{Q}$, so is $M$ and we have an
exact sequence
\begin{equation}\label{eq:exact}
1 \rightarrow \Gamma \rightarrow \Pi \rightarrow \Omega \rightarrow 1
\end{equation}
where $\Omega=$ Gal$(K/\mathbb{Q})$ and $\Pi=$ Gal$(M/\mathbb{Q})$. Conjugation in
$\Pi$ then induces an action of $\Omega$ on the Frattini quotient
$\tilde{\Gamma}=$ Gal$(L/K)$ of $\Gamma$. Any continuous morphism $\rho:\Pi\rightarrow I$
maps $\Gamma$ to $I(1)$ and induces a morphism $\overline{\rho}: \Omega \rightarrow I/I(1)=T_{tors}$
and a $\overline{\rho}$-equivariant morphism $\tilde{\rho}:\tilde{\Gamma}\rightarrow\tilde{I}(1)$.
If $\rho(\Gamma)=I(1)$, then $\tilde{\rho}$ is also surjective.
Suppose conversely that we are given the finite data
\[
\overline{\rho}:\Omega\rightarrow T_{tors}\quad\mbox{and}\quad\tilde{\rho}:\tilde{\Gamma}\twoheadrightarrow\tilde{I}(1).
\]
Then as 
$\Omega$ has order prime to $p$, the Schur-Zassenhaus theorem ($\cite{Wilson}$, proposition $2.3.3$)
implies that the exact sequence $\ref{eq:exact}$ splits. The choice of a splitting $\Pi\simeq\Gamma\rtimes\Omega$
yields a non-canonical action of $\Omega$ on $\Gamma$ which lifts
the canonical action of $\Omega$ on the Frattini quotient $\tilde{\Gamma}$.
By $\cite{Green}$, proposition~$2.3$, $\tilde{\rho}$ lifts to a continuous $\Omega$-equivariant surjective
morphism $\rho':\Gamma\twoheadrightarrow I(1)$, which plainly gives
a continuous morphism
\begin{equation*}
\rho=(\rho',\overline{\rho}):\Pi\simeq\Gamma\rtimes\Omega\rightarrow I=I(1)\rtimes T_{tors}
\end{equation*}
inducing $\overline{\rho}:\Omega\rightarrow T_{tors}$ and $\tilde{\rho}:\tilde{\Gamma}\twoheadrightarrow\tilde{I}(1)$.
Thus:

\begin{prop}\label{prop:after}
Under the above assumptions on $K$, there is a continuous morphism $\rho:\Pi\rightarrow I$ such that $\rho(\Gamma)=I(1)$
if and only if there is a morphism $\overline{\rho}:\Omega\rightarrow T_{tors}$
such that the induced $\mathbb{F}_{p}[\Omega]$-module $\overline{\rho}^{\ast}\tilde{I}(1)$
is a quotient of $\tilde{\Gamma}$. 
\end{prop}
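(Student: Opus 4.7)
The argument is essentially the one sketched in the paragraph immediately preceding the proposition; I would organise it into the two implications.

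For the forward implication, starting from a continuous morphism $\rho:\Pi\rightarrow I$ with $\rho(\Gamma)=I(1)$, composition with the projection $I\twoheadrightarrow I/I(1)=T_{tors}$ is trivial on $\Gamma$ and hence factors through a morphism $\overline{\rho}:\Omega\rightarrow T_{tors}$. Passing to Frattini quotients, $\rho$ induces a continuous $\overline{\rho}$-equivariant morphism $\tilde{\rho}:\tilde{\Gamma}\rightarrow\tilde{I}(1)$, which is surjective since $\rho(\Gamma)=I(1)$ (surjectivity on Frattini quotients is equivalent to topological surjectivity on pro-$p$ groups). This exhibits $\overline{\rho}^{\ast}\tilde{I}(1)$ as an $\mathbb{F}_{p}[\Omega]$-module quotient of $\tilde{\Gamma}$.

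For the converse, given $\overline{\rho}:\Omega\rightarrow T_{tors}$ together with a surjection $\tilde{\rho}:\tilde{\Gamma}\twoheadrightarrow\overline{\rho}^{\ast}\tilde{I}(1)$ of $\mathbb{F}_{p}[\Omega]$-modules, I would first apply Schur--Zassenhaus to split the exact sequence $1\rightarrow\Gamma\rightarrow\Pi\rightarrow\Omega\rightarrow1$; this is legitimate because $\Gamma$ is pro-$p$ while $|\Omega|$ is prime to $p$, so one obtains an isomorphism $\Pi\simeq\Gamma\rtimes\Omega$. Any choice of splitting equips $\Gamma$ with a (non-canonical) continuous action of $\Omega$ lifting the canonical action on the Frattini quotient $\tilde{\Gamma}$. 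Since $K$ is $p$-rational, $\Gamma$ is a free pro-$p$ group, so Greenberg's Proposition~2.3 applies and lifts the $\Omega$-equivariant surjection $\tilde{\rho}$ to a continuous $\Omega$-equivariant surjection $\rho':\Gamma\twoheadrightarrow I(1)$. The pair $\rho:=(\rho',\overline{\rho})$ then assembles into a continuous morphism $\Gamma\rtimes\Omega\rightarrow I(1)\rtimes T_{tors}$; the equivariance of $\rho'$ is precisely the condition needed for this map to respect the semidirect product structure, and $\rho(\Gamma)=I(1)$ by construction.

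The only non-formal step is the equivariant lifting $\tilde{\rho}\rightsquigarrow\rho'$: a non-equivariant lift exists immediately from the freeness of $\Gamma$, but preserving the $\Omega$-action requires a genuine input, supplied by Greenberg's Proposition~2.3. Everything else -- splitting the extension via Schur--Zassenhaus, extracting $\overline{\rho}$ from the target side, and patching $(\rho',\overline{\rho})$ into a homomorphism on the semidirect product -- is formal once that equivariant lift is in hand.
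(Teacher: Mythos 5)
Your proof is correct and follows exactly the same route as the paper's: extract $\overline{\rho}$ and the $\overline{\rho}$-equivariant surjection on Frattini quotients in one direction, and in the other direction split the extension by Schur--Zassenhaus, lift $\tilde{\rho}$ equivariantly via Greenberg's Proposition~2.3 (using $p$-rationality to get $\Gamma$ free pro-$p$), and reassemble on the semidirect product. You also correctly isolate the equivariant lifting as the only genuinely non-formal step.
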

 \noindent The Frattini quotient $\tilde{I}(1)$ is an $\mathbb{F}_p[T_{tors}]$-module and by the map $\overline{\rho}$, we can consider $\tilde{I}(1)$ as an $\mathbb{F}_p[\Omega]$-module which we denote by $\overline{\rho}^{\ast}\tilde{I}(1)$.
\subsection{~}

Suppose now that 
\begin{flushleft}
$ {\large \textbf{A($K$):}}$ \ $K$ is a totally complex abelian (thus CM) Galois extension
of $\mathbb{Q}$ which is $p$-rational of 

\hspace{1.5cm} \  degree $[K:\mathbb{Q}]\mid p-1$. 

\end{flushleft}

Let $\hat{\Omega}$ be the group of characters of $\Omega$ with values
in $\mathbb{F}_{p}^{\times}$, $\hat{\Omega}_{odd}\subset\hat{\Omega}$
the subset of odd characters (those taking the value $-1$ on complex
conjugation), and $\chi_{0}\in\hat{\Omega}$ the trivial character.
Then by $\cite{Green}$ proposition $3.3$,
\[
\tilde{\Gamma}=\oplus_{\chi \in \hat{\Omega}_{odd} \cup \{\chi_0\}}\mathbb{F}_p(\chi)
\]
as an $\mathbb{F}_{p}[\Omega]$-module. In particular, $\tilde{\Gamma}$
is multiplicity free. Suppose therefore also that the $\mathbb{F}_{p}[T_{tors}]$-module
$\tilde{I}(1)$ is multiplicity free, i.e. by corollary $\ref{cor:good}$,

\begin{flushleft}
$ {\large \textbf{B($G$):}}$ \ \ If $p=3$, then $\mathbf{G}$ is not of type $A_{1}$,
$B_{\ell}$ or $C_{\ell}$ ($\ell\geq2$), $F_{4}$ or $G_{2}$. 
\end{flushleft}

\noindent For $\mathcal{S}$ as in section $2.4$, we define
\[
    \hat{\Omega}^{\mathcal{S}}_{odd}= 
\begin{cases}
    \hat{\Omega}_{odd} \cup \chi_0,& \text{if } {\mathcal{S}}=\emptyset\\
    \hat{\Omega}_{odd},              & \text{if }  {\mathcal{S}}\neq \emptyset.
\end{cases}
\]

 Note that $\mathcal{S}=\emptyset$
unless $\mathbf{G}$ if of type $A_{\ell}$ with $p\mid\ell+1$ or
$\mathbf{G}$ is of type $E_{6}$ with $p=3$, in which both cases
$\mathcal{S}$ is a singleton. We thus obtain: 
\begin{cor}\label{cor:main}
Under the assumptions $ {\textbf{A($K$)}}$ on $K$ and
$ { \textbf{B($G$)}}$ on $\mathbf{G}$, there is a morphism $\rho:\Pi\rightarrow I$
  such that $\rho(\Gamma)=I(1)$ if and only if there is morphism $\overline{\rho}:\Omega\rightarrow T_{tors}$
  such that the characters $\alpha\circ\overline{\rho}:\Omega\rightarrow\mathbb{F}_{p}^{\times}$
  for $\alpha\in \Delta \cup \{-\alpha_{max}\}$ are all distinct and belong to $\hat{\Omega}^{\mathcal{S}}_{odd}$.
\end{cor}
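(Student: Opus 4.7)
The plan is to apply Proposition~\ref{prop:after}, which reduces the corollary to the existence of $\overline{\rho}:\Omega\to T_{tors}$ such that the pullback $\overline{\rho}^{\ast}\tilde{I}(1)$ is a quotient of $\tilde{\Gamma}$ as an $\mathbb{F}_{p}[\Omega]$-module. Since $|\Omega|$ divides $p-1$ and is therefore prime to $p$, Maschke's theorem ensures that $\mathbb{F}_{p}[\Omega]$ is semisimple and every simple module is one-dimensional of the form $\mathbb{F}_{p}(\chi)$ for a unique $\chi\in\hat{\Omega}$. In this semisimple setting, ``being a quotient of'' is equivalent to ``being a direct summand of'', so the condition on $\overline{\rho}^{\ast}\tilde{I}(1)$ will reduce to a containment of multisets of characters.

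Next I would explicitly compute the multiset of characters on each side. Using Corollary~\ref{cor:submain}, together with hypothesis $\textbf{B}(G)$ and the simplicity of $\mathbf{G}$ to discard the magenta $G_{2}$-summand, one obtains
\[
\overline{\rho}^{\ast}\tilde{I}(1) \;\cong\; \mathbb{F}_{p}(\chi_{0})^{\sharp\mathcal{S}} \;\oplus\; \bigoplus_{\alpha\in\Delta\cup\{-\alpha_{max}\}}\mathbb{F}_{p}(\alpha\circ\overline{\rho})
\]
as an $\mathbb{F}_{p}[\Omega]$-module, while the description of $\tilde{\Gamma}$ recalled before the corollary, valid under hypothesis $\textbf{A}(K)$, reads $\tilde{\Gamma}\cong\bigoplus_{\chi\in\hat{\Omega}_{odd}\cup\{\chi_{0}\}}\mathbb{F}_{p}(\chi)$, which is multiplicity-free.

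The final step is a direct comparison of these two decompositions. Because $\tilde{\Gamma}$ is multiplicity-free, $\overline{\rho}^{\ast}\tilde{I}(1)$ is a direct summand (equivalently, a quotient) of $\tilde{\Gamma}$ if and only if the characters occurring in it are pairwise distinct and all lie in $\hat{\Omega}_{odd}\cup\{\chi_{0}\}$. I would then unwind this according to the two cases in the definition of $\hat{\Omega}^{\mathcal{S}}_{odd}$: when $\mathcal{S}=\emptyset$, there is no contribution from the trivial-character factor, and the criterion becomes exactly that the $\alpha\circ\overline{\rho}$ for $\alpha\in\Delta\cup\{-\alpha_{max}\}$ are pairwise distinct and belong to $\hat{\Omega}_{odd}\cup\{\chi_{0}\}=\hat{\Omega}^{\mathcal{S}}_{odd}$; when $\mathcal{S}$ is a singleton, the copy of $\chi_{0}$ already present forces these characters to be distinct from $\chi_{0}$ as well, and the criterion becomes exactly that they are pairwise distinct and lie in $\hat{\Omega}_{odd}=\hat{\Omega}^{\mathcal{S}}_{odd}$. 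I do not anticipate a real obstacle here: the structural inputs (Proposition~\ref{prop:after}, Corollary~\ref{cor:submain}, and the description of $\tilde{\Gamma}$) are all in hand, and the only delicate bookkeeping is keeping track of the trivial-character multiplicity in the two cases of $\hat{\Omega}^{\mathcal{S}}_{odd}$.
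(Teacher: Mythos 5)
Your proposal is correct and follows essentially the same route the paper does implicitly (Proposition~\ref{prop:after}, the Greenberg decomposition of $\tilde{\Gamma}$, Corollary~\ref{cor:submain}, and the multiplicity-free comparison licensed by $\textbf{B}(G)$). The only addition is that you make the semisimplicity of $\mathbb{F}_p[\Omega]$ (Maschke, using $|\Omega|\mid p-1$) explicit to justify replacing ``quotient'' by ``direct summand,'' a step the paper leaves unstated; the bookkeeping with $\sharp\mathcal{S}\in\{0,1\}$ and the two cases of $\hat{\Omega}^{\mathcal{S}}_{odd}$ is handled correctly.
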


\subsection{Some examples.}
Write $\Delta=\{\alpha_1,...,\alpha_{\ell}\}$ and  $\alpha_{max}=n_1\alpha_1+\cdots +n_{\ell}\alpha_{\ell}$ using the conventions of the tables in
$\cite{BoLie46}$. In this part we suppose that $p$ is a regular (odd) prime
and take $K=\mathbb{Q}(\mu_{p})$, so that $K$ is $p$-rational and
$\Omega=\mathbb{Z}/(p-1)\mathbb{Z}$.
\begin{lem}\label{eq:findcharacters}
Suppose $\mathbf{G}$ is of type $A_{\ell},B_{\ell},C_{\ell}$ or $D_{\ell}$ and  $p \geq 2l+3$ (resp. $p \geq 2l+5$) if $p \equiv 1\mod 4$ (resp. $p \equiv 3\mod 4$). Then we can find distinct characters $\phi_1,...,\phi_{{\ell}+1} \in \hat{\Omega}_{odd} \cup \chi_0$ such that $\phi_1^{n_1}\phi_2^{n_2}\cdots \phi_{{\ell}}^{n_{\ell}}\phi_{{\ell}+1}=\chi_0$. Furthermore, if $\mathbf{G}$ is of type $A_{\ell}$ and ${\ell}$ is odd, then one can even choose the characters $\phi_1,...,\phi_{{\ell}+1}$ to be inside $\hat{\Omega}_{odd}$.
\end{lem}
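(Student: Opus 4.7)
The plan is to convert the lemma into an additive problem in $\mathbb{Z}/(p-1)\mathbb{Z}$. Since $K=\mathbb{Q}(\mu_p)$, the group $\Omega\cong(\mathbb{Z}/p\mathbb{Z})^{\times}$ is cyclic of order $p-1$, and so is the character group $\hat{\Omega}$. Fix a generator $\omega$ of $\hat{\Omega}$ and write every $\phi\in\hat{\Omega}$ as $\omega^{a}$ with $a\in\mathbb{Z}/(p-1)\mathbb{Z}$. Then $\chi_{0}$ corresponds to $a=0$, and since complex conjugation is the unique element of order $2$ in $\Omega$, one has $\phi\in\hat{\Omega}_{odd}$ iff $a$ is odd. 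Setting $\phi_{i}=\omega^{a_{i}}$ and $n_{\ell+1}:=1$, the lemma asks for distinct elements $a_{1},\ldots,a_{\ell+1}$ of
\[
E:=\{0\}\cup\{1,3,5,\ldots,p-2\}
\]
with $\sum_{i=1}^{\ell+1}n_{i}a_{i}\equiv 0\pmod{p-1}$. Reading the marks from Bourbaki's tables, $(n_{i})$ equals $(1,\ldots,1)$ in type $A_{\ell}$, $(1,2,\ldots,2)$ in $B_{\ell}$, $(2,\ldots,2,1)$ in $C_{\ell}$, and $(1,2,\ldots,2,1,1)$ in $D_{\ell}$.

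Next I would perform a parity check. Reducing the sum equation modulo $2$ (valid since $p-1$ is even) yields $\sum_{n_{i}\text{ odd}}a_{i}\equiv 0\pmod{2}$, so the weight-$1$ positions must collectively carry an even number of odd entries. As $0\in E$ can be used at most once, this essentially forces every weight-$1$ position to be odd: in types $B_{\ell}$ and $C_{\ell}$ the two weight-$1$ positions are both odd, and in $D_{\ell}$ all four of them are; the weight-$2$ positions are unconstrained modulo $2$ and may be either zero or odd.

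The construction exploits the involution $a\mapsto -a:=p-1-a$ of $\mathbb{Z}/(p-1)\mathbb{Z}$, which preserves oddness. Assign the symmetric set $\{\pm 1\}$ to the two weight-$1$ positions in $B_{\ell}$ or $C_{\ell}$, and $\{\pm 1,\pm 3\}$ to the four weight-$1$ positions in $D_{\ell}$; their contribution to the sum is automatically $0$ modulo $p-1$. Then fill the remaining weight-$2$ positions with pairs $\{\pm k_{1},\pm k_{2},\ldots\}$, adjoining $0\in E$ once if the number of positions left is odd. The resulting total sum is $0$ in $\mathbb{Z}/(p-1)\mathbb{Z}$. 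For $A_{\ell}$ with $\ell$ odd the entire configuration forms a single weight-$1$ block of size $\ell+1$, and the purely symmetric choice $\{\pm 1,\pm 3,\ldots,\pm\ell\}$ contains no zero, which establishes the \emph{furthermore} claim.

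The remaining task is to verify pairwise distinctness in $E$; this is where the bound on $p$ enters. With radii $k_{i}$ drawn from $\{1,3,5,\ldots\}$, distinctness amounts to $k_{i}+k_{j}<p-1$ for every pair, i.e.\ the maximal radius $\kappa$ must satisfy $\kappa<(p-1)/2$. In the construction above one has $\kappa\le\ell$ under clean assignment, so $p\ge 2\ell+3$ is enough; the weaker bound $p\ge 2\ell+5$ when $p\equiv 3\pmod 4$ reflects the parity of $(p-1)/2$, since in that case the naive insertion of $0$ at a weight-$2$ position in types $B_{\ell},C_{\ell},D_{\ell}$ clashes with distinctness against a paired element, forcing one to replace $0$ by an additional pair $\pm(\ell+1)$ and so raising $\kappa$ to $\ell+1$. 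The main obstacle is precisely this case-by-case bookkeeping: each of the four classical Dynkin types, both parities of $\ell$, and both residue classes of $p$ modulo $4$ must be verified in turn; but in every case the distinctness check collapses to a single inequality on the maximal radius, yielding the stated bound.
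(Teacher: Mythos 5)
Your proposal is essentially the same argument as the paper's, transcribed additively: where the paper repeatedly chooses pairs of characters $\{\chi,\chi^{-1}\}$ with $\chi\neq\chi^{-1}$ in $\hat\Omega_{odd}$ (plus possibly $\chi_0$) and distributes them over the positions so that each pair contributes trivially to the product, you write each character as $\omega^{a}$ and choose symmetric pairs $\{k,p-1-k\}$ of odd residues (plus possibly $0$). The type-by-type assignments you sketch agree with the paper's, and the parity observation (that the positions with odd mark must collectively carry an even number of odd entries) is a clean sanity check that the paper leaves implicit.

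The one place your write-up goes astray is the sentence explaining the $p\equiv 3\bmod 4$ bound: ``the naive insertion of $0$ at a weight-$2$ position\dots clashes with distinctness against a paired element, forcing one to replace $0$ by an additional pair $\pm(\ell+1)$.'' That is not what happens; $0$ never conflicts with any $\pm k$ for $k\geq 1$, and nothing ever forces you to drop it. The real reason the bound tightens is purely the parity interaction you already have set up: your maximal radius $\kappa$ is always odd, and you need $\kappa<\tfrac{p-1}{2}$. When $p\equiv 1\bmod 4$ the quantity $\tfrac{p-1}{2}$ is even, so the largest admissible odd $\kappa$ is $\tfrac{p-3}{2}$, giving $p\geq 2\kappa+3$; when $p\equiv 3\bmod 4$ it is odd, so the largest admissible odd $\kappa<\tfrac{p-1}{2}$ is $\tfrac{p-5}{2}$, giving $p\geq 2\kappa+5$. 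Taking the worst case $\kappa=\ell$ (attained when $\ell$ is odd) yields exactly the two stated bounds. Equivalently, this is the paper's count: there are $[\tfrac{p-1}{4}]$ non-degenerate pairs $\{\chi,\chi^{-1}\}$ in $\hat\Omega_{odd}$, one fewer than $\tfrac{p-1}{4}$ when $p\equiv 3\bmod 4$ because the quadratic character is a self-inverse odd character, and the construction needs at most $\lceil\ell/2\rceil$ of them. Aside from that one sentence and the fact that you leave the case-by-case assignment largely as a sketch where the paper spells it out, the proof is correct and matches the paper's.
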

\begin{proof}
Since $\Omega$ is (canonically) isomorphic to $\mathbb{Z}/(p-1)\mathbb{Z}$,
$\sharp \hat{\Omega}_{odd}=\frac{p-1}{2}$ and there are exactly $[\frac{p-1}{4}]$
pairs of characters $\{\chi,\chi^{-1}\}$ with $\chi\neq\chi^{-1}$
in $\hat{\Omega}_{odd}$. The condition on $p$ is equivalent to ${\ell} \leq 2[\frac{p-1}{4}]-1$.

If $\mathbf{G}$ is of type $A_{\ell}$, then $\alpha_{max}=\alpha_{1}+\cdots+\alpha_{\ell}$.
If $\ell$ is even and $\frac{\ell}{2}\leq[\frac{p-1}{4}]$, then
we can pick $\frac{\ell}{2}$ distinct pairs of odd characters $\{\chi,\chi^{-1}\}$
as above for $\{\phi_{1},\cdots,\phi_{\ell}\}$ and set $\phi_{\ell+1}=\chi_{0}$.
If $\ell$ is odd and $\frac{\ell+1}{2}\leq[\frac{p-1}{4}]$, then
we can choose $\frac{\ell+1}{2}$ distinct such pairs for the whole
set $\{\phi_{1},\cdots,\phi_{\ell+1}\}$.

If $\mathbf{G}$ is of type $D_{\ell}$ (with $\ell\geq4$), then $\alpha_{max}=\alpha_1+2\alpha_2+...+2\alpha_{{\ell}-2}+\alpha_{{\ell}-1}+\alpha_{\ell}$.
 Now if $\ell$ is odd we can pick $\frac{\ell+1}{2}$ such pairs $\{\chi,\chi^{-1}\}$, one for  $\{\phi_{{\ell}-1},\phi_{\ell}\}$, another pair for  $\{\phi_1,\phi_{{\ell}+1}\}$ and $\frac{\ell-3}{2}$ such pairs for  $\{\phi_2,...,\phi_{{\ell}-2}\}$. If ${\ell}$ is even, we let $\phi_2$  be the trivial character, and we can choose $\frac{\ell}{2}$ such pairs of characters $\{\chi,\chi^{-1}\}$, one pair for  $\{\phi_1,\phi_{{\ell}-1}\}$, another pair for  $\{\phi_{\ell},\phi_{{\ell}+1}\}$ and $\frac{\ell-4}{2}$ such pairs for  $\{\phi_3,...,\phi_{{\ell}-2}\}$. So the inequality that we will need is $4 \leq {\ell} \leq 2[\frac{p-1}{4}]-1$.

If $\mathbf{G}$ is of type $B_{\ell}$ (with $\ell\geq2$), then $\alpha_{max}=\alpha_1+2\alpha_2+...+2\alpha_{\ell}$. If
 ${\ell}$ is odd then we pick $\frac{\ell+1}{2}$ pairs of characters $\{\chi,\chi^{-1}\}$; one pair for  $\{\phi_1,\phi_{{\ell}+1}\}$ and $\frac{\ell-1}{2}$ such pairs for  $\{\phi_2,...,\phi_{\ell}\}$. If ${\ell}$ is even then we need $\frac{\ell}{2}$ pairs of $\{\chi,\chi^{-1}\}$; one pair for  $\{\phi_1,\phi_{{\ell}+1}\}$ and $\frac{\ell-2}{2}$ such pairs for  $\{\phi_3,...,\phi_{\ell}\}$ and we let $\phi_2$ be the trivial character. So in this case we need $3 \leq {\ell} \leq 2[\frac{p-1}{4}]-1$.
 
 The remaining $C_{\ell}$ case is analogous.
\end{proof}
\begin{lem}\label{lem:find}
  Suppose $\mathbf{G}$ is of type $E_6,E_7,E_8,F_4$ or $G_2$ and $p \geq \sum_{i=1}^{\ell}(2i-1)n_i+2{\ell}$. Then we can find distinct characters $\phi_1,...,\phi_{\ell+1} \in \hat{\Omega}_{odd}$ such that $\phi_1^{n_1}\phi_2^{n_2}\cdots \phi_{{\ell}}^{n_{\ell}}\phi_{{\ell}+1}=\chi_0$.
 \end{lem}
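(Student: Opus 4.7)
The plan is to identify $\hat\Omega$ with $\mathbb{Z}/(p-1)\mathbb{Z}$ by fixing a generator of $\Omega \cong (\mathbb{Z}/p\mathbb{Z})^\times$, so that odd characters correspond to odd residues modulo $p-1$ and the desired relation $\phi_1^{n_1}\cdots\phi_\ell^{n_\ell}\phi_{\ell+1} = \chi_0$ translates into the linear congruence
\[
n_1 \phi_1 + \cdots + n_\ell \phi_\ell + \phi_{\ell+1} \equiv 0 \pmod{p-1}.
\]
The task is then to exhibit distinct odd residues satisfying this congruence.

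First I would check parity consistency. From the Bourbaki tables in \cite{BoLie46}, for each of $E_6$, $E_7$, $E_8$, $F_4$, $G_2$ one computes that $\sum_{i=1}^\ell n_i$ equals $11$, $17$, $29$, $11$, $5$ respectively, hence is odd in each case. Setting $S := \sum_{i=1}^\ell (2i-1)n_i$, we have $S \equiv \sum_i n_i \pmod 2$, so $S$ is also odd. This parity consistency is precisely what allows the equation above to have a solution with every $\phi_j$ odd.

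The main construction is then to take $\phi_i := 2i-1$ for $i = 1, \ldots, \ell$ and $\phi_{\ell+1} := p-1-S$. The bound $p \geq S + 2\ell$ yields $\phi_{\ell+1} \geq 2\ell - 1 > 0$ and $\phi_i \leq 2\ell-1 < p-1$, so every $\phi_j$ is a valid residue in $\{1,\ldots, p-2\}$. Oddness of $\phi_{\ell+1}$ follows since $p$ and $S$ are both odd, and by construction $\sum n_i \phi_i + \phi_{\ell+1} = S + (p-1-S) \equiv 0 \pmod{p-1}$. The values $\phi_1, \ldots, \phi_\ell$ are the $\ell$ smallest positive odd integers and are therefore pairwise distinct; whenever $\phi_{\ell+1} > 2\ell - 1$ this already gives complete distinctness.

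The main obstacle is the boundary case $p = S + 2\ell$, in which $\phi_{\ell+1} = 2\ell-1 = \phi_\ell$. In this single situation I would modify the construction by replacing $\phi_j$ by $2\ell+1$ for some index $j$ chosen so that $n_j(\ell - j + 1) \geq \ell$ (such a $j$ exists by direct inspection of the $(n_i)$-tables for each of the five exceptional types; for instance $j=4$ works for $E_6$, $E_7$, $E_8$, $j=3$ for $F_4$, and $j=2$ for $G_2$), and then recomputing $\phi_{\ell+1}$ accordingly. This perturbation pushes $\phi_{\ell+1}$ into the upper range of odd residues, automatically disjoint from $\{1, 3, \ldots, 2\ell-1, 2\ell+1\}$, and so resolves the collision. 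Handling this edge case via a short type-by-type check is the only real technical point in the proof.
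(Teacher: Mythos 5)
Your main construction coincides with the paper's own proof: after identifying $\hat\Omega$ with $\mathbb{Z}/(p-1)\mathbb{Z}$, take $\phi_i$ corresponding to $2i-1$ for $i\le\ell$ and $\phi_{\ell+1}$ corresponding to $-r \bmod (p-1)$ with $r=\sum_{i=1}^\ell(2i-1)n_i$, and use the parity of $\sum n_i$ (read off from the Bourbaki tables) to get oddness of $\phi_{\ell+1}$. Where you go beyond the paper is in noticing, correctly, that the boundary case $p=r+2\ell$ is a genuine failure point for this construction: there $-r\equiv p-1-r=2\ell-1\pmod{p-1}$, so $\phi_{\ell+1}$ collides with $\phi_\ell$. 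The paper's proof simply asserts that the residues are distinct whenever $p\ge r+2\ell$, which is an off-by-one slip (distinctness holds only for $p>r+2\ell$); this does not affect Corollary~\ref{cor:lastcor}, which invokes the lemma with the strict inequality $p>c$, but it does mean the lemma as stated is not quite proved by the paper's argument. Your perturbation fix (replace one $\phi_j$ by $2\ell+1$ for a suitable $j$, then recompute $\phi_{\ell+1}$) is sound: checking the five exceptional types, an index $j$ with $n_j(\ell-j+1)\ge\ell$ exists in each case, and the recomputed $\phi_{\ell+1}$ is odd, positive, and larger than $2\ell+1$, hence disjoint from the rest. So your version in fact establishes the lemma with the non-strict bound, whereas the paper's written argument only establishes it for $p>r+2\ell$.
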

\begin{proof}
The choice of a generator $\xi$ of $\mathbb{F}_{p}^{\times}$
yields an isomorphism $\mathbb{Z}/(p-1)\mathbb{Z}\simeq\hat{\Omega}$,
mapping $i$ to $\chi_{i}$ and $1+2\mathbb{Z}/(p-1)\mathbb{Z}$ to
$\hat{\Omega}_{odd}$. Set $\phi_{i}=\chi_{2i-1}\in\hat{\Omega}_{odd}$
for $i=1,\cdots,\ell$ and $\phi_{\ell+1}=\chi_{-r}$ where $r={\sum_{i=1}^{\ell}n_{i}\cdot(2i-1)}$.
The tables in $\cite{BoLie46}$ show that $h=\sum_{i=1}^{\ell}n_{i}$ is odd,
thus also $\phi_{\ell+1}\in\hat{\Omega}_{odd}$ and plainly $\phi_{1}^{n_{1}}\cdots\phi_{\ell}^{n_{\ell}}\phi_{\ell+1}=1$.
If $p\geq \sum_{i=1}^{\ell}(2i-1)n_i+2{\ell}$, the elements $\{2i-1,-\sum_{i=1}^{\ell}n_{i}\cdot(2i-1);i\in [1,{\ell}]\}$ are all distinct modulo $p-1$, which
proves the lemma.
\end{proof}
\begin{rem}\label{rem:exceptional}
For $\mathbf{G}$ of type $E_6,E_7,E_8,F_4$ or $G_2$, the tables in $\cite{BoLie46}$ show that the constant  $\sum_{i=1}^{\ell}(2i-1)n_i+2{\ell}$ of lemma $\ref{lem:find}$ is $79,127,247,53,13$ respectively.   
\end{rem}
\begin{cor}\label{cor:lastcor}
There is a constant $c$ depending only upon the
type of $\mathbf{G}$ such that if $p>c$ is a regular prime, then
for $K=\mathbb{Q}(\mu_{p})$, $M$, $\Pi$ and $\Gamma$ as above,
there is a continuous morphism $\rho:\Pi\rightarrow I$ with $\rho(\Gamma)=I(1)$. 
\end{cor}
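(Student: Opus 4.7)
The plan is to reduce the corollary to the criterion of Corollary~\ref{cor:main} and then invoke the character constructions of Lemmas~\ref{eq:findcharacters} and~\ref{lem:find}. First, I would check that $K=\mathbb{Q}(\mu_p)$ satisfies assumption $\textbf{A(K)}$ whenever $p$ is a regular odd prime: it is an abelian, totally complex Galois extension of $\mathbb{Q}$ with $[K:\mathbb{Q}]=p-1$, and by example~$(2)$ of section~3.1 it is $p$-rational. Next, taking $c\geq\max(3,\ell+1)$ guarantees $p>3$ (which makes $\textbf{B(G)}$ automatic) and also $\mathcal{S}=\emptyset$: by the remark preceding Corollary~\ref{cor:main}, $\mathcal{S}$ is nonempty only for $\mathbf{G}$ of type $A_\ell$ with $p\mid\ell+1$ or of type $E_6$ with $p=3$, neither of which occurs once $c$ exceeds those thresholds. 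Hence $\hat{\Omega}^{\mathcal{S}}_{odd}=\hat{\Omega}_{odd}\cup\{\chi_0\}$.

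Under these assumptions, Corollary~\ref{cor:main} reduces the problem to constructing a morphism $\overline{\rho}:\Omega\rightarrow T_{tors}$ such that the composites $\alpha\circ\overline{\rho}$ for $\alpha\in\Delta\cup\{-\alpha_{max}\}$ are pairwise distinct elements of $\hat{\Omega}_{odd}\cup\{\chi_0\}$. Since $\mathbf{G}$ is simple adjoint, $T_{tors}=\Hom(\mathbb{Z}R,\mu_{p-1})$, and $\Delta$ is a $\mathbb{Z}$-basis of $\mathbb{Z}R$, so specifying $\overline{\rho}$ is the same as freely choosing characters $\phi_i:=\alpha_i\circ\overline{\rho}\in\hat{\Omega}$ for $i=1,\ldots,\ell$; one then has $\phi_{\ell+1}:=(-\alpha_{max})\circ\overline{\rho}=\phi_1^{-n_1}\cdots\phi_\ell^{-n_\ell}$, and the desired properties translate precisely into those supplied by Lemmas~\ref{eq:findcharacters} and~\ref{lem:find}: the $\phi_i$ pairwise distinct, lying in $\hat{\Omega}_{odd}\cup\{\chi_0\}$, and satisfying $\phi_1^{n_1}\cdots\phi_\ell^{n_\ell}\phi_{\ell+1}=\chi_0$.

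I would then take $c$ to be the maximum of (i) the bound $2\ell+5$ of Lemma~\ref{eq:findcharacters} (valid for all classical types $A_\ell,B_\ell,C_\ell,D_\ell$), (ii) the bounds $79,127,247,53,13$ of Lemma~\ref{lem:find} for types $E_6,E_7,E_8,F_4,G_2$ respectively, as recorded in Remark~\ref{rem:exceptional}, and (iii) $\max(3,\ell+1)$ to secure $\textbf{B(G)}$ and $\mathcal{S}=\emptyset$. For any regular prime $p>c$, the relevant lemma produces the required $\phi_i$, hence $\overline{\rho}$, and Corollary~\ref{cor:main} then delivers the continuous morphism $\rho:\Pi\rightarrow I$ with $\rho(\Gamma)=I(1)$.

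No serious obstacle remains: all the substantive work is already carried out in Corollary~\ref{cor:main} and in the two character-existence lemmas. The only mild bookkeeping is verifying that the cases of Lemma~\ref{eq:findcharacters} where $\chi_0$ is forced to appear among the $\phi_i$ (notably type $A_\ell$ with $\ell$ even) remain compatible with the requirement $\mathcal{S}=\emptyset$, which is guaranteed by the lower bound $c\geq\ell+1$.
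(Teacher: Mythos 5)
Your proof is correct and follows the same route the paper intends: verify that $K=\mathbb{Q}(\mu_p)$ satisfies $\textbf{A($K$)}$ for regular $p$, observe that $p>c$ forces $\textbf{B($G$)}$ and $\mathcal{S}=\emptyset$, use the identification $\Hom(\Omega,T_{tors})\simeq\Hom(\mathbb{Z}R,\hat{\Omega})$ via the basis $\Delta$ to translate Corollary~\ref{cor:main} into the character-existence problem, and then feed in Lemmas~\ref{eq:findcharacters} and~\ref{lem:find} together with Remark~\ref{rem:exceptional} to pin down $c$. Your closing remark about bookkeeping is sound but slightly over-cautious: the bounds $2\ell+3$ (or $2\ell+5$) and those of Remark~\ref{rem:exceptional} already exceed $\ell+1$, so the conditions $p>3$ and $\mathcal{S}=\emptyset$ come for free and the ``furthermore'' clause of Lemma~\ref{eq:findcharacters} is never actually needed under these hypotheses.
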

In conclusion, we have determined a minimal set of topological generators of the pro-$p$ Iwahori subgroup of a split reductive groups over $\mathbb{Z}_p$ (theorem $\ref{thm:Main}$) and used it to study the structure of the Frattini quotient $\tilde{I}(1)$ as an $\mathbb{F}_p[T_{tors}^{ad}]$-module (corollary $\ref{cor:submain}$). Then we have used corollary $\ref{cor:submain}$ to determine when $\tilde{I}(1)$ is multiplicity free (see corollary $\ref{cor:good}$). Furthermore in proposition $\ref{prop:after}$ and corollary $\ref{cor:main}$, assuming $p$-rationality, we have shown that we can construct Galois representations if and only if we can find a suitable list of distinct characters in $\Omega$, the existence of which is discussed in section $3.4$ under the assumption $K=\mathbb{Q}(\mu_p)$, for any sufficiently large regular prime $p$ (see corollary $\ref{cor:lastcor}$).


\begin{thebibliography}{100} 
 \addtolength{\leftmargin}{0.2in} 
 \setlength{\itemindent}{-0.2in}
 \bibitem[1]{BoLie46} N. Bourbaki,\textit{ Éléments de mathématique. Fasc. XXXIV. Groupes et algèbres de Lie. Chapitre
 IV: Groupes de Coxeter et systèmes de Tits. Chapitre V: Groupes engendrés par des réflexions.
 Chapitre VI: systèmes de racines. Actualités Scientifiques et Industrielles}, No. 1337, Hermann,
 Paris, (1968).
 
 
 
 \bibitem[2]{Fuji} S. Fujii,\textit{ On the maximal pro-$p$ extension unramified outside $p$ of an imaginary quadratic field}, Osaka J. Math. 45, (2008), 41-60.
 
 
 \bibitem[3]{Green}  R. Greenberg, \textit{Galois representation with open image}, Annales mathématiques du Québec, Volume 40, Issue 1, (2016), 83-119.
 
 
 
 
 \bibitem[4]{JaNg} J. F. Jaulent, T. Nguyen Quang Do, \textit{Corps $p$-rationnels, corps $p$-réguliers, et ramification restreinte}, Séminaire de Théorie des Nombres de Bordeaux, (1987-88), Exposé 10, 10-01 10-26.
  
  \bibitem[5]{Min} J. Minardi, \textit{Iwasawa modules for $\mathbb{Z}_p^d$-extensions of algebraic number fields}, University of Washington Ph. D. thesis, (1986).
  
   \bibitem[6]{Moh} A. Movahhedi, T. Nguyen Quang Do, \textit{Sur l’arithmétique des corps de nombres $p$-rationnels}, Prog. Math. 81, Birkhauser, (1990), 155-200.
   
 \bibitem[7]{Thesis} A. Movahhedi, \textit{Sur les $p$-extensions des corps $p$-rationnels}, Thèse de doctorat en Mathématiques, Paris $7$, $(1988)$.
 \bibitem[8]{Mov}  A. Movahhedi, \textit{Sur les p-extensions des corps $p$-rationnels}, Math. Nach, 149, (1990), 163-176.

 
  
  \bibitem[9]{OS} R. Ollivier, P. Schneider, \textit{A canonical torsion theory for pro-$p$ Iwahori-Hecke modules}, https://arxiv.org/pdf/1602.00738v1.pdf, (2016).
  \bibitem[10]{Rg} C. M. Ringel, \textit{The $(n-1)$-antichains in a root poset of width $n$},  http://arxiv.org/abs/1306.1593v1, (2013).
  
   
   \bibitem[11]{SGA3.3r} \textit{Séminaire de Géométrie Algébrique du Bois Marie - 1962-64 - Schémas en groupes - (SGA $3$)}  Philippe Gille and Patrick Polo, editors.
   
   \bibitem[12]{Shav} I. R. Shafarevich, \textit{Extensions with given points of ramification}, Amer. Math. Soc, Translations 59, (1966), 128-149.
 
 



\bibitem[13]{Ti79} J. Tits,\textit{ Reductive groups over local fields,} In Automorphic forms, representations and L-functions (Proc. Sympos. Pure Math., Oregon State Univ., Corvallis, Ore., 1977), Part 1, Proc. Sympos. Pure Math., XXXIII, pages 29-69. Amer. Math. Soc., Providence, R.I., (1979).
 \bibitem[14]{Wilson} J. S. Wilson, \textit{Profinite Groups}, Oxford Science Publications, London Mathematical Society Monographs, New Series $19$, $(2005)$.

 
  \end{thebibliography}
\end{document}